\begin{document}
	\newtheorem{Theorem}{Theorem}
    \newtheorem{Proposition}[Theorem]{Proposition}
    \newtheorem{Lemma}[Theorem]{Lemma}
	\newtheorem{Definition}[Theorem]{Definition}
    \newtheorem{Remark}[Theorem]{Remark}
    \newtheorem{Algorithm}[Theorem]{Algorithm}
    \newtheorem{Assumption}[Theorem]{Assumption}
    \newtheorem{Example}[Theorem]{Example}
    \numberwithin{equation}{section}
    \numberwithin{Theorem}{section}

\title[Error Analysis On EIs]{Error Analysis on a Novel Class of Exponential Integrators with Local Linear Extension Techniques for Highly Oscillatory ODEs}


\author[1]{\fnm{Zhihao} \sur{Qi}}\email{zhihaoqi@smail.nju.edu.cn}

\author*[1]{\fnm{Weibing} \sur{Deng}}\email{wbdeng@nju.edu.cn}

\author[1]{\fnm{Fuhai} \sur{Zhu}}\email{zhufuhai@nju.edu.cn}

\affil[1]{\orgdiv{School of Mathematics}, \orgname{Nanjing University}, \orgaddress{\city{Nanjing}, \postcode{210093}, \country{People's Republic of China}}}


\abstract{This paper investigates a class of non-autonomous highly oscillatory ordinary differential equations characterized by a linear component inversely proportional to a small parameter $\varepsilon$, with purely imaginary eigenvalues, and an $\varepsilon$-independent nonlinear part. When $0<\varepsilon\ll 1$, the rapidly oscillatory nature of the solution imposes severe constraints on step size selection and numerical accuracy, leading to considerable computational difficulties. Inspired by a linearization technique that introduces auxiliary polynomial variables, a new family of explicit exponential integrators has recently been proposed. These methods do not require the linear part to be diagonal or to have eigenvalues that are integer multiples of a fixed value—a common assumption in multiscale approaches—and they achieve arbitrarily high orders of convergence without imposing order conditions.
	The main contribution of this work is to provide a rigorous error analysis for this new class of methods under a bounded oscillatory energy condition. To this end, we first establish the equivalence between the high-dimensional system and the original problem using algebraic techniques. Building on these foundational results, we prove that the numerical schemes, when employing auxiliary polynomial variables of degree $k$, achieve a uniform convergence order of $O(h^{k+1})$. In particular, an improved order of $O(\varepsilon h^k)$ is attained {when $h$ is larger than the scale of $\varepsilon$.} These theoretical findings are further applied to second-order oscillatory systems, leading to improved uniform accuracy with respect to $\varepsilon$. Finally, numerical experiments confirm the optimality of the derived error estimates.}

\keywords{Local linearization, Highly oscillatory systems, Uniform accuracy, Exponential integrator, Variation-of-constants formula}



\maketitle

\section{Introduction}\label{Sec1}
In this work, we study systems of ordinary differential equations (ODEs) of the form
\begin{equation}\label{P1}
	\begin{aligned}
		&\dot{\mathbf{u}}=\frac{1}{\varepsilon}A\mathbf{u}+\mathbf{F}(\mathbf{u},t), \quad t\in[0,T],\\
		&\mathbf{u}(0)=\varepsilon^\nu\mathbf{u}_{\mathrm{in}},
	\end{aligned}
\end{equation}
where $\mathbf{u}\in\mathbb{C}^d$ ($d\geq 1$) is the solution vector, $\varepsilon$ is a dimensionless parameter governing the oscillation frequency, $\nu\in\mathbb{R}$ characterizes the initial value scaling, $\mathbf{u}_{\mathrm{in}}\in\mathbb{C}^d$ is a given vector, and $T>0$ is an $\varepsilon$-independent final time. The matrix $A=(A_{ij})_{d\times d}$ is assumed to be diagonalizable with purely imaginary eigenvalues, though not necessarily diagonal. The nonlinear forcing term $\mathbf{F}$ depends explicitly on time and is Lipschitz continuous in all variables up to order {$k$}, with Lipschitz constants independent of $\varepsilon$.

For $0 < \varepsilon \ll 1$, system (\ref{P1}) describes a wide range of highly oscillatory problems dominated by linear oscillatory dynamics, as the $O(\varepsilon^{-1})$ scaling of the linear operator dictates the oscillatory behavior. The purely imaginary spectrum of $A$ implies that each solution component exhibits oscillations, behaving as a superposition of high-frequency waves with temporal wavelengths on the order of $O(\varepsilon)$. A representative example of such systems is the second-order oscillatory equation \cite{LJL2005, F2006, WYSW2010, CCMS2012, HLW2013, CLMZ2017}
\begin{equation}\label{OP}
\begin{aligned}
  &\ddot{\mathbf{y}}+\frac{1}{\varepsilon^2}M\mathbf{y}=\mathbf{g}(\mathbf{y},t),\quad  t\in [0,T],\\
  &\mathbf{y}(0)=\varepsilon^\nu\mathbf{y}_{\mathrm{in}},\quad \dot{\mathbf{y}}(0)=\varepsilon^{\nu-1}\dot{\mathbf{y}}_{\mathrm{in}},
\end{aligned}
\end{equation}
where $M$ is a symmetric positive-definite matrix and $\mathbf{g}$ denotes a nonlinear function. Such highly oscillatory systems are widespread in scientific computing and emerge in numerous disciplines, including classical mechanics, molecular dynamics, quantum physics, astronomy, circuit simulation, and plasma physics \cite{CJLL2006, GSS1998, HL2000, HLW2013, WYW2013}. We introduce the scaled momentum variable $\mathbf{p}=\varepsilon \dot{\mathbf{y}}$ and set $\mathbf{u}=[\mathbf{y}^\top,\mathbf{p}^\top]^\top$, thereby reformulating system (\ref{OP}) into the form (\ref{P1}).

If the right-hand side $\mathbf{g}$ of (\ref{OP}) is time-independent and there exists a potential function $U(\mathbf{y})$ satisfying $\mathbf{g}(\mathbf{y})=-\nabla U(\mathbf{y})$, then (\ref{OP}) represents a Hamiltonian system \cite{LR2004}. The Hamiltonian takes the form
\begin{equation}\label{Hamiltonian}
  H(\mathbf{y},\dot{\mathbf{y}})=H_1(\mathbf{y},\dot{\mathbf{y}})+U(\mathbf{y})
\end{equation}
with
\begin{equation}\label{OscEnergy}
  H_1(\mathbf{y},\dot{\mathbf{y}}) =\frac{1}{2}\|\dot{\mathbf{y}}\|^2 +\frac{1}{2\varepsilon^2}\mathbf{y}^\top M\mathbf{y},
\end{equation}
where $\|\cdot\|$ denotes the standard Euclidean norm. The quantity $H_1$ defined in (\ref{OscEnergy}) represents the total kinetic and potential energy associated with the linear oscillator component of (\ref{OP}).

Systems characterized by bounded energy have attracted considerable attention in both theoretical and applied research, particularly in classical mechanics, where such models are widely applicable \cite{Z2017, CJLL2006}. More generally, the right-hand side of (\ref{OP}) may model both internal forces and external time-dependent interactions, extending the applicability of (\ref{OP}) beyond conservative systems. Nevertheless, the definition of oscillatory energy (\ref{OscEnergy}) remains applicable. Since $H_1$ typically dominates the total energy $H$ in highly oscillatory problems, we therefore introduce the bounded oscillatory energy condition
\begin{equation}\label{boec}
  H_1(\mathbf{u})\leq C,\quad t\in[0,T],
\end{equation}
as a more general energy constraint for non-autonomous systems. Here and in the following, $C$ represents a generic constant independent of both $h$ and $\varepsilon$. Under this condition, $\nu\geq 1$ holds, which implies that the amplitude of high-frequency oscillators in (\ref{OP}) is comparable to $\varepsilon$ while their velocity remains $O(1)$ as the frequency increases.

In this paper, we focus on the more general first-order formulation (\ref{P1}) with the physically relevant bounded oscillatory energy (\ref{boec}) to perform a theoretical analysis of numerical schemes. Therefore, combining (\ref{boec}) with the definition of $H_1$ directly implies that, for problem (\ref{P1}), the bounded oscillatory energy condition is equivalent to
\begin{equation}\label{bdener}
\mathbf{\|u\|} \le C\varepsilon.
\end{equation}

From a computational perspective, the highly oscillatory nature of the solution presents fundamental challenges for numerical treatment, rendering many classical numerical schemes inefficient or even ineffective. The dominant linear terms in system~(\ref{P1}) impose severe stability constraints on conventional numerical methods, including standard Runge–Kutta and Runge–Kutta–Nyström schemes \cite{FQ2010,HLW2013}. To preserve numerical reliability, the time step size must resolve the fastest oscillations, necessitating the restrictive condition $h < C\varepsilon$ \cite{Z2017,BDZ2014}. Simulating such repetitive oscillatory motions incurs substantial practical costs and demands considerable computational resources, thereby severely limiting practical efficiency.

Furthermore, when applied to (\ref{P1}), most conventional numerical methods exhibit errors that grow with increasing system frequency. These methods are typically constructed based on Taylor expansions of the exact solution, whose remainder terms dominate the truncation error. As a result, the global error generally depends on negative powers of $\varepsilon$, since temporal derivatives of the solution scale as $1/\varepsilon^k$ for some $k > 0$ \cite{WJ2024,CG2021}. In other words, such traditional approaches lack uniform accuracy with respect to $\varepsilon$, rendering them unsuitable for solving (\ref{P1}) in the regime of very small $\varepsilon$.

To address these numerical challenges, Gautschi first proposed in the 1960s the use of trigonometric polynomials based on the variation-of-constants formula to handle high-frequency components \cite{G1961}. This inspired the subsequent development of various Gautschi-type trigonometric integrators, which incorporate different filter functions \cite{HL1999,G2005,GH2006}. A systematic investigation of the long-term behavior of numerical solutions has been carried out using the modulated Fourier expansion technique introduced in \cite{HL2000}, with further contributions provided in \cite{CHL2003,CHL2005,C2006,HLW2020,WZ2023}. When system (\ref{P1}) models highly oscillatory Hamiltonian dynamical systems \cite{LR2004}, extensive research has been devoted to designing structure-preserving numerical solvers (see \cite{HL2000,HL1999,GH2006,WWX2013,BDZ2014,WWM2017,CXWJ2021} and references therein). Such methods can yield reliable numerical approximations after substantial time steps by preserving the inherent geometric structure of the original oscillatory system in an equivalent long-time dynamical sense. Nevertheless, most of these approaches remain subject to strict stability constraints.

To overcome step size restrictions,  exponential integrators (EIs) have emerged as powerful tools for stiff differential equations. Over recent decades, EIs have attracted considerable attention and undergone substantial development. A number of influential studies have been conducted \cite{CM2002,HOS2009,K2005,T2006,CCO2008,MW2017}, with a comprehensive review available in \cite{HO2010}. The construction of EIs builds on the variation-of-constants formula, which enables exact treatment of the linear part of the system. Owing to their favorable stability properties \cite{CM2002}, such methods are particularly effective when stiffness is primarily confined to the linear component.

When applied to second-order systems of the form (\ref{OP}), the matrix exponential naturally takes the form of trigonometric functions \cite{WWX2013}. This connection has motivated the development of several specialized methods, including ARKN \cite{F2006}, ERKN \cite{WWX2013,WWM2017}, and Filon-type methods \cite{WLW2013,K2008}, many of which also possess excellent structure-preserving qualities.

Further advances have integrated EIs with linearization techniques \cite{DBCOJ2007,KO2013,RG1997,T2006}, as exemplified by Exponential Rosenbrock-type methods \cite{CO2009,HOS2009,LL2023}. These approaches often achieve enhanced accuracy and efficiency while simplifying order conditions. However, whether linearization is employed or not, conventional EIs are generally derived under the assumption that solutions to stiff systems are temporally smooth. This assumption is frequently violated in highly oscillatory problems, leading to a loss of uniform accuracy with respect to $\varepsilon$.

Over the past two decades, a variety of multiscale methods have been proposed that achieve both $\varepsilon$-independent step sizes and accuracy. The core strategy behind these approaches is to leverage high-frequency characteristics to reformulate the original system into an oscillation-free form, thereby enabling the design of numerical methods with uniform accuracy. Methods such as the Heterogeneous Multiscale Method \cite{E2003,AEEV2012,EELRV2007,CS2010,AEKLT2013}, the Multi-Revolution Composition Method \cite{CMMV2014,CCZ2018}, and the Stroboscopic Averaging Method \cite{CLMV2020,CCMS2012,CMMZ2016,CCMM2015a} focus primarily on capturing averaged behavior over multiple oscillation periods. Meanwhile, Multiscale Time Integrators \cite{BDZ2014,Z2017,BCJT2016} and Multiscale Exponential Wave Integrators \cite{BC2014,FXY2023,FY2021} are constructed based on asymptotic expansions of exact solutions. The Nested Picard Iteration scheme \cite{BFS2018,CG2021,CZ2022} applies the variation-of-constants formula iteratively, and the Two-Scale Formulation approach \cite{WZ2023,CLMZ2017,CCLM2015b} treats high-frequency components as additional variables in the discretization.
While these methods achieve uniform accuracy, many of them rely on structural assumptions regarding the matrix $A$. Typical requirements include that all eigenvalues be integer multiples of a fixed principal frequency, or that the matrix be diagonal—whether assumed directly or achieved through preprocessing steps such as diagonalization. These assumptions help decouple the oscillatory modes of the system, effectively reducing the dynamics to single-frequency behavior and thereby providing a foundational simplification for the construction of multiscale methods.

In this study, we provide a theoretical analysis of a novel class of EIs based on a local linear extension technique recently introduced in \cite{QD2025b}. Unlike the multiscale methods discussed earlier, which exploit inherent high-frequency features of the equations, the proposed approach relies primarily on a dimension-raising technique to achieve linearization. By operating within the EI framework, the resulting methods effectively utilize the variation-of-constants formula, which treats the linear part exactly without imposing special structural assumptions on the matrix $A$ or the nonlinear function $\mathbf{F}$. The local linearization strategy introduced here addresses a key challenge of achieving uniform accuracy with both small and large step sizes when applying conventional EIs to system (\ref{P1}) with bounded oscillatory energy. Moreover, these new EIs bypass order conditions entirely, thereby theoretically enabling the systematic construction of schemes with arbitrarily high orders of convergence.

The numerical solution of (\ref{P1}) proceeds according to the following computational framework. First, a set of auxiliary variables is defined as polynomial functions in $\mathbf{u}$ up to degree $k$ based on the numerical solution point $\mathbf{U}_n$ at each time step $t_n$. Governing equations for these auxiliary variables are then derived from the original system (\ref{P1}), leading to an equivalent high-dimensional ODE system referred to as the local linear extension system. Numerical approximations $\mathbf{U}_{n+1}$ to the exact solution $\mathbf{u}(t_{n+1})$ are obtained by applying EIs to this extended system, followed by a projection step that maps the high-dimensional solution back to the original state space.

A key insight underlying this approach is that, through the dimension-raising transformation, the $k$-th order Taylor expansion of the nonlinear term $\mathbf{F}$ becomes linear in the extension variable. Although the extension system retains a highly oscillatory character, its reconstructed linear part incorporates more information than that of the original system. The resulting EIs are fully explicit, ensuring computational efficiency while achieving uniform accuracy without the need for iterative procedures.

The main contribution of this work is to establish a rigorous theoretical analysis for the new class of EIs under the condition (\ref{bdener}). {In general, these methods achieve a uniform error bound of $O(h^{k+1})$ for the numerical solution, where $k$ is the highest degree of the auxiliary polynomial variables. Furthermore, the convergence order is improved to $O(\varepsilon h^k)$ for larger step sizes $h > c_0\varepsilon$, where $c_0$ is a constant independent of $h$ and $\varepsilon$.} The characteristic of high-order accuracy without step size restriction permits the methods to be particularly effective for problems in classical mechanics field. As a direct application of the general theory, we extend these results to the second-order system~(\ref{OP}), obtaining higher-order error estimates with improved uniform convergence.

The analysis builds fundamentally on the adiabatic transformation \cite{HO2010,JL2003,LJL2005} and periodic decomposition techniques, which allow each oscillatory component in the high-dimensional system to be treated individually. A particularly non-trivial and crucial observation is that the $O(1/\varepsilon)$-scaled linear part in the extension system preserves the spectral properties of the original matrix $A$. We rigorously prove this invariance via detailed algebraic derivations, a result that plays an essential role in establishing uniform accuracy. Another key challenge in the analysis arises from potential resonance phenomena among high-frequency components in the extension system, where standard periodic decomposition techniques are insufficient. To overcome this, we analyze the invariant subspaces and their matrix representations corresponding to the $O(1/\varepsilon)$-scaled high-dimensional operators. This approach successfully isolates and eliminates resonance effects from the error estimates.

The remainder of this paper is organized as follows. Section \ref{Sec2} introduces the construction of local linear extension exponential integrators. Section \ref{Sec3} analyzes the algebraic properties of the linear part in the local linear extension system, and Section~\ref{Sec4} establishes the convergence analysis under various parameter regimes. Several representative numerical experiments are presented in Section~\ref{Sec5} to validate the theoretical findings. Concluding remarks are provided in the final section.

\section{The local linear extension EIs}\label{Sec2}
This section concisely reviews the construction methodology for the local linear extension system and the associated EIs. We first treat time $t$ as an independent variable and define the augmented state vector $\mathbf{x}=[\mathbf{u}^\top,t]^\top$. The original system (\ref{P1}) is reformulated as
\begin{equation}\label{P}
\begin{aligned}
  &\dot{\mathbf{x}}=\frac{1}{\varepsilon}A_1\mathbf{x}+\mathbf{f}(\mathbf{x}),\\
  &\mathbf{x}(0)=\varepsilon^\nu\mathbf{x}_{\mathrm{in}},
\end{aligned}
\end{equation}
where
\begin{equation}\label{P02}
  A_1 = (A_1)_{ij}=\left[\begin{array}{cc}
              A & \mathbf{0}_{d\times 1} \\
              \mathbf{0}_{1\times d} & 0
            \end{array}\right],\quad
  \mathbf{f}(\mathbf{x})=\left[\begin{array}{c}
                                 \mathbf{F}(\mathbf{u},t) \\
                                 1
                               \end{array}\right],\quad
  \mathbf{x}_{\mathrm{in}}=\left[\begin{array}{c}
                       \mathbf{u}_{\mathrm{in}} \\
                       0
                     \end{array}\right].
\end{equation}
Here $\mathbf{0}_{d_1\times d_2}$ represents a $d_1\times d_2$ zero matrix.

Introducing local linear extension variables requires the preliminary definition of multi-indices and index sets with associated algebraic structures. For positive integers $k$ and $d$, we define the index set as $\mathcal{I}_{d+1}^{[[k]]} := \{1,\cdots,d+1\}^k$, the $k$-fold Cartesian product of the set $\{1,\cdots,d+1\}$. Each element $\alpha \in \mathcal{I}_{d+1}^{[[k]]}$ represents a multi-index of length $k$, denoted by $|\alpha| = k$. Conventionally, for $k=0$, we define $\mathcal{I}_{d+1}^{[[0]]}=\{\emptyset\}$ as the singleton set containing only the empty index with no components. We introduce the following equivalence relation on this index set $\mathcal{I}_{d+1}^{[[k]]}$.
\begin{Definition}\label{Idk}
(\romannumeral1) Let $S_k$ denote the symmetric group on $k$ elements. The right action of group $S_k$ on the index set $\mathcal{I}_{d+1}^{[[k]]}$ is defined as follows: for any permutation $\sigma\in S_k$ of $\{1,\cdots,k\}$ and multi-index $\alpha=(\alpha_1,\cdots,\alpha_k)\in\mathcal{I}_{d+1}^{[[k]]}$,
\begin{equation*}
  \sigma(\alpha):=(\alpha_{\sigma(1)},\cdots,\alpha_{\sigma(k)})\in\mathcal{I}_{d+1}^{[[k]]}.
\end{equation*}
(\romannumeral2) Define an equivalence relation $\sim$ on $\mathcal{I}_{d+1}^{[[k]]}$ induced by the group action: for any $\alpha,\beta\in\mathcal{I}_{d+1}^{[[k]]}$,
\begin{equation*}
  \alpha\sim\beta\iff\mathrm{~there~exists~}\sigma\in S_k\mathrm{~such~that~} \sigma(\alpha)=\beta.
\end{equation*}
The equivalence follows immediately from the group properties of $S_k$. We denote by $[\alpha]$ the equivalence class containing $\alpha$.
\end{Definition}
By arranging the components of each multi-index in ascending order, the subset of $\mathcal{I}_{d+1}^{[[k]]}$
\begin{equation}\label{LLEVIdk02}
  \tilde{\mathcal{I}}_{d+1}^{[[k]]}:=\{\alpha=(\alpha_1,\cdots,\alpha_k)\in\mathcal{I}_{d+1}^{[[k]]}:\alpha_1\leq\cdots\leq\alpha_k\}
\end{equation}
provides a natural choice of representative elements for each equivalence class. Although our subsequent construction of numerical schemes and analysis is independent of the choice of representatives, we fix the representatives to be those multi-indices having the form specified in (\ref{LLEVIdk02}) for convenience. For $\alpha\in\mathcal{I}_{d+1}^{[[k]]}$, we denote such representative multi-indices of $[\alpha]$ with form (\ref{LLEVIdk02}) by $\overline{\alpha}$.

Define $\mathcal{I}_{d+1}^{[k]}:=\bigcup_{j=0}^k \mathcal{I}_{d+1}^{[[j]]}$. For a given point $\hat{\mathbf{x}}=[\hat{\mathbf{u}}^\top,\hat{t}]^\top$ with $\hat{\mathbf{u}}\in\mathbb{C}^d$ and $\hat{t}\geq 0$, the polynomial induced by a multi-index $\alpha\in\mathcal{I}_{d+1}^{[k]}$ is given by
\begin{equation}\label{LLEVIdk01}
	(\mathbf{x}-\hat{\mathbf{x}})^\alpha:=\begin{cases}
		1, & \mbox{if } |\alpha|=0, \\
		(x_{\alpha_1}-\hat{x}_{\alpha_1})\cdots(x_{\alpha_j}-\hat{x}_{\alpha_j}), & \mbox{if } |\alpha|=j\geq 1,
	\end{cases}
\end{equation}
where $x_i$ and $\hat{x}_i$ represent the $i$-th component of $\mathbf{x}$ and $\hat{\mathbf{x}}$, respectively.
These preparations lead to the definition of local linear extension variables.

\begin{Definition}[Local linear extension variable]\label{LLEV}
(\romannumeral1) We first introduce the sets of polynomials in $\mathbf{x}$ of exact degree $j$ and up to degree $k$ (of the form (\ref{LLEVIdk01})):
\begin{equation}\label{LLEV02}
  P_{\mathbf{x}}^{[[j,\hat{\mathbf{x}}]]}=\left\{(\mathbf{x}-\hat{\mathbf{x}})^{\overline{\alpha}}:\overline{\alpha}\in\tilde{\mathcal{I}}_{d+1}^{[[j]]}\right\}, \quad
  P_{\mathbf{x}}^{[k,\hat{\mathbf{x}}]}=\bigcup_{j=0}^k P_{\mathbf{x}}^{[[j,\hat{\mathbf{x}}]]}.
\end{equation}
Letting $|\cdot|$ denote set cardinality, we set $D^{[k]} = \big|P_{\mathbf{x}}^{[k,\hat{\mathbf{x}}]}\big|$. We then define the $k$-th order local linear extension variable at $\hat{\mathbf{x}}$, denoted by $\mathbf{x}^{[k,\hat{\mathbf{x}}]}$, as a $D^{[k]}$-dimensional vector. Its first component is 1, components 2 through $(d+2)$ are given by $\mathbf{x}-\hat{\mathbf{x}}$, and the remaining components may follow any prescribed order.

(\romannumeral2) We introduce the $D^{[[j]]}$-dimensional vectors $\mathbf{x}^{[[j,\hat{\mathbf{x}}]]}$ for $j=0,\cdots,k$, where $D^{[[j]]}=\big|P_{\mathbf{x}}^{[[j,\hat{\mathbf{x}}]]}\big|$, representing vectors composed of $j$-th degree polynomial components uniquely from $P_{\mathbf{x}}^{[[j,\hat{\mathbf{x}}]]}$. We define the {projection matrix} that extracts components $m_1$ through $m_2$ from a $D^{[k]}$-dimensional vector as follows:
\begin{equation*}
  \Pi_{m_1}^{m_2}=[\mathbf{0}_{m\times (m_1-1)},I_m,\mathbf{0}_{m\times (D^{[k]}-m_2)}],\quad\textrm{with}\quad m=m_2-m_1+1,
\end{equation*}
where $I_m$ denotes the $m\times m$ identity matrix. Then we specify $\mathbf{x}^{[[0,\hat{\mathbf{x}}]]}=\Pi_1^1\mathbf{x}^{[k,\hat{\mathbf{x}}]}=1$ and $\mathbf{x}^{[[1,\hat{\mathbf{x}}]]}=\Pi_2^{d+2}\mathbf{x}^{[k,\hat{\mathbf{x}}]}=\mathbf{x}-\hat{\mathbf{x}}$.

(\romannumeral3) The partial differential operator induced by the multi-index $\alpha\in\mathcal{I}_{d+1}^{[k]}$ is defined as
\begin{equation*}
  \frac{\partial^{\alpha}}{\partial \mathbf{x}^\alpha}:=\begin{cases}
                    \operatorname{id}, & \mbox{if } |\alpha|=0, \\
                   \frac{\partial^j}{\partial x_{\alpha_1}\cdots \partial x_{\alpha_j}}, & \mbox{if } |\alpha|=j\geq 1.
                 \end{cases}
\end{equation*}
\end{Definition}
\begin{Remark}\label{Rmk0}
Any valid sorting scheme is permitted for the components excluding the first $(d+2)$-th of $\mathbf{x}^{[k,\hat{\mathbf{x}}]}$, as different choices lead to equivalent numerical schemes and theoretical analyses. For example, the study in \cite{QD2025b} chooses the sorting scheme
\begin{equation}\label{LLEV03}
\mathbf{x}^{[k,\hat{\mathbf{x}}]}=\left[\begin{array}{cccc}
  \mathbf{x}^{[[0,\hat{\mathbf{x}}]]}, & \left(\mathbf{x}^{[[1,\hat{\mathbf{x}}]]}\right)^\top, & \cdots, & \left(\mathbf{x}^{[[k,\hat{\mathbf{x}}]]}\right)^\top
\end{array}\right]^\top.
\end{equation}
The standard lexicographical order is applied to the multi-indices in $\mathcal{I}_{d+1}^{[[j]]}$ ($j=2,\cdots,k$), which uniquely specify a sorting for the components of $\mathbf{x}^{[[j,\hat{\mathbf{x}}]]}$ through (\ref{LLEVIdk01}).
\end{Remark}
By Definition \ref{LLEV}, the set $P_{\mathbf{x}}^{[k,\hat{\mathbf{x}}]}$ forms a basis spanning the space of polynomials of degree at most $k$ in variables $x_{1},\cdots,x_{d+1}$, denoted by $\mathbb{P}^k[\mathbf{x}]$. Employing the partial derivatives and polynomials in Definition \ref{LLEV}, we derive the $k$-th order Taylor expansion of $\mathbf{f}$ about a reference point $\hat{\mathbf{x}}$:
\begin{equation}\label{ParDiff01}
\mathbf{f}(\mathbf{x})=\sum_{j=0}^k\sum_{\overline{\alpha}\in\tilde{\mathcal{I}}_{d+1}^{[[j]]}} \frac{1}{\gamma(\overline{\alpha})} \frac{\partial^{\overline{\alpha}}\mathbf{f}(\hat{\mathbf{x}})}{\partial \mathbf{x}^{\overline{\alpha}}} (\mathbf{x}-\hat{\mathbf{x}})^{\overline{\alpha}} +\bar{\mathbf{r}}^{k+1}(\mathbf{x};\hat{\mathbf{x}}),
\end{equation}
where $\gamma(\alpha)=\prod\limits_{q=1}^{d+1}|\alpha_{\{q\}}|!$ with the set $\alpha_{\{q\}}=\{j:\alpha_j=q\}$ representing components of the multi-index $\alpha$ with their value $q$. The term $\bar{\mathbf{r}}^{k+1}(\mathbf{x};\hat{\mathbf{x}})$ denotes the $(k+1)$-th order Taylor remainder. According to (\ref{P02}), we have
\begin{equation}\label{ParDiff03}
  \bar{\mathbf{r}}^{k+1}(\mathbf{x};\hat{\mathbf{x}})
  =\left[\begin{array}{c}
      \mathbf{r}^{k+1}(\mathbf{u},t;\hat{\mathbf{u}},\hat{t})\\ 0
  \end{array}\right]
\end{equation}
with
\begin{equation}\label{ParDiff04}
	\begin{split}
  \mathbf{r}^{k+1}(\mathbf{u},t;\hat{\mathbf{u}},\hat{t}):=\mathbf{F}(\mathbf{u},t)-&\sum_{j=0}^k\sum_{\overline{\alpha}\in\tilde{\mathcal{I}}_{d+1}^{[[j]]}}\frac{1}{\gamma(\overline{\alpha})} \frac{\partial^{|\overline{\alpha}|}\mathbf{F}(\mathbf{u},t)}{\partial u_1^{|\overline{\alpha}_{\{1\}}|}\cdots\partial u_d^{|\overline{\alpha}_{\{d\}}|}\partial t^{|\overline{\alpha}_{\{d+1\}}|}}\\
  &\qquad(\mathbf{u}-\hat{\mathbf{u}})^{\chi(\overline{\alpha};d+1)}(t-\hat{t})^{|\overline{\alpha}_{\{d+1\}}|}{=O((\mathbf{x}-\hat{\mathbf{x}})^{k+1})}
  \end{split}
\end{equation}
denoting the $(k+1)$-th order Taylor remainder of $\mathbf{F}(\mathbf{u},t)$ expanded at $(\hat{\mathbf{u}},\hat{t})$. Here $\chi(\alpha;l)=(\alpha_1,\cdots,\alpha_{l-1},\alpha_{l+1},\cdots,\alpha_j)\in\mathcal{I}_{d+1}^{[[j-1]]}$ is the multi-index obtained by removing the $l$-th component from $\alpha\in\mathcal{I}_{d+1}^{[[j]]}$.

As the variables in $P_{\mathbf{x}}^{[k,\hat{\mathbf{x}}]}$ remain unknown, we now derive the governing equations for these auxiliary polynomial variables to close the system.
Consider a reference point $\hat{\mathbf{x}}$ and a multi-index $\alpha\in\mathcal{I}_{d+1}^{[k]}$ with its length $j$. Exploiting (\ref{P}) and (\ref{LLEVIdk01}), we obtain
\begin{align*}
\frac{\mathrm{d}}{\mathrm{d}t}(\mathbf{x}-\hat{\mathbf{x}})^\alpha
&=  \sum_{l=1}^{j} \frac{(x_{\alpha_1}-\hat{x}_{\alpha_1}) \cdots (x_{\alpha_j}-\hat{x}_{\alpha_j})}{x_{\alpha_l}-\hat{x}_{\alpha_l}} \frac{\mathrm{d}(x_{\alpha_l}-\hat{x}_{\alpha_l})}{\mathrm{d}t} \notag\\
&=  \sum_{l=1}^{j} (\mathbf{x}-\hat{\mathbf{x}})^{\chi(\alpha;l)} \left(\frac{1}{\varepsilon}\sum_{m=1}^{d+1}(A_1)_{\alpha_lm}x_m+f_{\alpha_l}(\mathbf{x})\right).
\end{align*}
Hence,
\begin{equation}\label{ParDiff02}
	\begin{split}
\frac{\mathrm{d}}{\mathrm{d}t}(\mathbf{x}-\hat{\mathbf{x}})^\alpha&=  \frac{1}{\varepsilon}\sum_{l=1}^{j}\sum_{m=1}^{d+1} (A_1)_{\alpha_lm}(\mathbf{x}-\hat{\mathbf{x}})^{\chi(\alpha;l)}(x_m-\hat{x}_m)\\
& \quad+\frac{1}{\varepsilon}\sum_{l=1}^{j}\sum_{m=1}^{d+1} (A_1)_{\alpha_lm}\hat{x}_m(\mathbf{x}-\hat{\mathbf{x}})^{\chi(\alpha;l)}\\
& \quad+\sum_{l=1}^{j}\sum_{m=0}^{k-j+1}\sum_{\overline{\beta}\in\tilde{\mathcal{I}}_{d+1}^{[[m]]}} \frac{1}{\gamma(\overline{\beta})} \frac{\partial^{\overline{\beta}}f_{\alpha_l}(\hat{\mathbf{x}})}{\partial \mathbf{x}^{\overline{\beta}}} (\mathbf{x}-\hat{\mathbf{x}})^{\chi(\alpha;l)}(\mathbf{x}-\hat{\mathbf{x}})^{\overline{\beta}} \\
& \quad+ \sum_{l=1}^{j} (\mathbf{x}-\hat{\mathbf{x}})^{\chi(\alpha;l)} \bar{r}_{\alpha_l}^{k-j+2}(\mathbf{x};\hat{\mathbf{x}}),
\end{split}
\end{equation}
where it employs a $(k-j+1)$-th order Taylor expansion of $f_{\alpha_l}$, with $f_{\alpha_l}$ and $\bar{r}_{\alpha_l}^{i}$ representing the ${\alpha_l}$-th components of the vector-valued functions $\mathbf{f}$ and $\bar{\mathbf{r}}^{i}$, respectively.

A key observation reveals that all terms in (\ref{ParDiff02}) have the expression as linear combinations of polynomials from $P_{\mathbf{x}}^{[k,\hat{\mathbf{x}}]}$, except those involving the nonlinear remainders $\bar{r}_{\alpha_l}^{k-j+2}(\mathbf{x};\hat{\mathbf{x}})$. We sum the coefficients to consolidate identical polynomials, organised according to a specific ordering scheme, enabling representation of all terms in (\ref{ParDiff02}) (excluding the remainder term) as inner products of coefficient vectors and $\mathbf{x}^{[k,\hat{\mathbf{x}}]}$. Through differentiation of each polynomial component in $\mathbf{x}^{[k,\hat{\mathbf{x}}]}$ by (\ref{ParDiff02}), we aggregate the resulting coefficient vectors and isolate $O({1}/{\varepsilon})$ terms from $O(1)$ components by the distinct scale. This process yields the coefficient matrices $A_1^{[k]}(\hat{\mathbf{x}})$ and $A_0^{[k]}(\hat{\mathbf{x}})$, thereby deriving the governing equation for the local linear extension variables $\mathbf{x}^{[k,\hat{\mathbf{x}}]}$:
\begin{equation}\label{LLES01}
  \frac{\mathrm{d}\mathbf{x}^{[k,\hat{\mathbf{x}}]}}{\mathrm{d}t} =\frac{1}{\varepsilon}A_1^{[k]}(\hat{\mathbf{x}})\mathbf{x}^{[k,\hat{\mathbf{x}}]} +A_0^{[k]}(\hat{\mathbf{x}})\mathbf{x}^{[k,\hat{\mathbf{x}}]} +\mathbf{R}^{[k]}(\mathbf{x}^{[k,\hat{\mathbf{x}}]};\hat{\mathbf{x}}).
\end{equation}
If the $i$-th row of (\ref{LLES01}) corresponds to the equation generated by the multi-index $\alpha$ of length $j$, then, by (\ref{ParDiff02}) and noting that the $(d+1)$-th row in (\ref{ParDiff03}) is zero, the $i$-th component of $\mathbf{R}^{[k]}(\mathbf{x}^{[k,\hat{\mathbf{x}}]};\hat{\mathbf{x}})$ is given by
\begin{equation}\label{LLES02}
  R_i^{[k]}(\mathbf{x}^{[k,\hat{\mathbf{x}}]};\hat{\mathbf{x}})=\sum_{l=1}^{j}\mathrm{I}_{\{\alpha_l\neq d+1\}} (\mathbf{x}-\hat{\mathbf{x}})^{\chi(\alpha;l)} r_{\alpha_l}^{k-j+2}(\mathbf{u},t;\hat{\mathbf{u}},\hat{t}),
\end{equation}
where the indicator function is defined as
\begin{equation*}
  \mathrm{I}_{\{\alpha_l\neq d+1\}}=
  \begin{cases}
    1, & \mbox{if~} \alpha_l\neq d+1, \\
    0, & \mbox{if~} \alpha_l= d+1.
  \end{cases}
\end{equation*}

Noting that there are no simple closed-form expressions for the matrices $A_1^{[k]}$ and $A_0^{[k]}$ in the compact formulation, we give a feasible algorithm to construct these matrices as follows. The primary methodology involves an element-wise process that assigns specific values to their respective positions. Specifically, we first get all multi-indices in $P_{\mathbf{x}}^{[k,\hat{\mathbf{x}}]}$ by traversing the row index of the matrices. When treating the equation induced by a certain multi-index, we locate the column indices for the associated polynomials on the right-hand side of (\ref{ParDiff02}), followed by assigning the coefficients in (\ref{ParDiff02}) to this specific position. We refer to Appendix \ref{apdx} for an algorithmic implementation and \cite{QD2025b} for a more detailed discussion.

Then we have the following definition of local linear extension system.
\begin{Definition}[Local linear extension]\label{LLES}
For a given reference point $\hat{\mathbf{x}}$ and positive integer $k$, let $\mathbf{x}^{[k,\hat{\mathbf{x}}]}$ denote the $k$-th order local linear extension variable at $\hat{\mathbf{x}}$. We define (\ref{LLES01}) as the $k$-th order local linear extension system of (\ref{P}) at $\hat{\mathbf{x}}$, and refer to the above procedure to generate system (\ref{LLES01}) as the local linear extension operations.
\end{Definition}
When constructing the local linear extension system, our approach employs the Taylor expansion of $\mathbf{f}(\mathbf{x})$ with respect to $\mathbf{x}$ in (\ref{ParDiff02}), rather than expanding $\mathbf{f}(\mathbf{x}(t))$ solely in $t$ where conventional EIs are conceptually based. This critical distinction ensures the linear component of (\ref{LLES01}) incorporates enhanced information than~(\ref{P}), while the resulting formulation maintains $\varepsilon$-independence, which is essential for designing uniformly accurate methods.

To obtain a numerical solution for system (\ref{P}), we discretize the time interval $[0,T]$ by employing uniform steps, denoted as $0=t_0<t_1<\cdots< t_N=T$, with a constant step size $h=t_n-t_{n-1}$ ($n=1,\cdots,N$). We now develop the numerical method for solving (\ref{P}) based on the local linear extension system (\ref{LLES01}).

Let $\mathbf{X}_n$ represent the numerical solution to $\mathbf{x}(t_n)$. Taking $\hat{\mathbf{x}} = \mathbf{X}_n$ as the reference point in Definition \ref{LLEV}, we construct the $k$-th order local linear extension variable $\mathbf{X}^{[k,\mathbf{X}_n]}$ associated with the numerical solution. As indicated in Definition \ref{LLEV}, evaluation at time $t_n$ yields $\mathbf{X}^{[k,\mathbf{X}_n]}(t_n)=e_1$, where $e_j$ denotes the $j$-th canonical basis vector. We define the initial value problem satisfied by the extension variable $\mathbf{X}^{[k,\mathbf{X}_n]}(t)$ over the interval $[t_n,t_{n+1}]$ as
\begin{equation}\label{LLES-num-truncation}
\begin{aligned}
  &\frac{\mathrm{d}\mathbf{X}^{[k,\mathbf{X}_n]}}{\mathrm{d}t} =\frac{1}{\varepsilon} A_1^{[k]}(\mathbf{X}_n) \mathbf{X}^{[k,\mathbf{X}_n]} +A_0^{[k]}(\mathbf{X}_n)\mathbf{X}^{[k,\mathbf{X}_n]}, \quad t_n\leq t\leq t_{n+1},\\
  &\mathbf{X}^{[k,\mathbf{X}_n]}(t_n)=e_1,
\end{aligned}
\end{equation}
which represents a linear truncation of (\ref{LLES01}), obtained by setting the reference point $\hat{\mathbf{x}}=\mathbf{X}_n $ and neglecting the remainder term $\mathbf{R}^{[k]}$. Its exact solution provides $\mathbf{X}^{[k,\mathbf{X}_n]}(t)$ at the next time step
\begin{equation}\label{LLEEI01}
  \mathbf{X}^{[k,\mathbf{X}_n]}(t_{n+1})=\mathrm{e}^{\left(\frac{1}{\varepsilon}A_1^{[k]}(\mathbf{X}_n) +A_0^{[k]}(\mathbf{X}_n)\right)h} e_1.
\end{equation}
Following Definition \ref{LLEV} and (\ref{LLEVIdk01}), we recover the numerical solution of (\ref{P}) at $t_{n+1}$ through the projection
\begin{equation}\label{LLEEI02}
  \mathbf{X}_{n+1}:=\Pi_2^{d+2}\mathbf{X}^{[k,\mathbf{X}_n]}(t_{n+1})+\mathbf{X}_n.
\end{equation}
The equivalence between (\ref{P}) and (\ref{P1}) leads to the numerical solution for $\mathbf{u}(t_{n+1})$ as
\begin{equation}\label{LLEEI03}
  \mathbf{U}_{n+1}:=\left[
  \begin{array}{cc}
  \mathbf{0}_{d\times 1} & I_d
  \end{array}\right]\mathbf{X}_{n+1}.
\end{equation}

The resulting scheme (\ref{LLEEI01})--(\ref{LLEEI03}) is fully explicit and requires no iterative procedures. The analysis in Section \ref{Sec4} demonstrates that these integrators achieve a convergence order of $k+1$ with respect to $h$ when employing $k$-th order extension variables, implying that the improved convergence order stems directly from augmenting the dimensionality of the matrices $A_1^ {[k]}$ and $A_0^{[k]}$. That is to say, these EIs can theoretically achieve an arbitrary high-order accuracy without requiring order conditions.

\section{Algebraic properties of $A_1^{[k]}$}\label{Sec3}
We adopt the following algebraic convention. For $0\leq i<j\leq k$, the linear span of the polynomial set $\bigcup\limits_{l=i+1}^jP_{\mathbf{x}}^{[[l,\hat{\mathbf{x}}]]}$ is naturally linearly isomorphic to the quotient space $\mathbb{P}^j[\mathbf{x}]/\mathbb{P}^i[\mathbf{x}]$. We identify these isomorphic spaces and, in subsequent analysis, simply denote this polynomial space by $\mathbb{P}^j[\mathbf{x}]/\mathbb{P}^i[\mathbf{x}]$.

\subsection{Linear extension lemma}\label{SSec3-2}
When implementing EIs, the performance of the numerical scheme is fundamentally determined by the properties of the linear part in (\ref{LLES-num-truncation}), which is primarily governed by $A_1^{[k]}(\hat{\mathbf{x}})$. Although all eigenvalues of $A_1$ have zero real parts, it is essential to verify whether $A_1^{[k]}(\hat{\mathbf{x}})$ generated via local linear extension operations preserves this property. The emergence of spurious eigenvalues with positive real parts—which are not inherent to the original system—would significantly complicate the dynamics of the high-dimensional extended system. Fortunately, the following proposition, as a key result of this subsection, rules out this possibility.

\begin{Proposition}[Linear extension lemma]\label{Lem3-5}
	Suppose the local linear extension operation applied to system (\ref{P}) yields $A_1^{[k]}(\hat{\mathbf{x}})$, where $A_1$ is defined in (\ref{P02}). Then $A_1^{[k]}(\hat{\mathbf{x}})$ is diagonalizable, and all its eigenvalues have zero real parts.
\end{Proposition}
A proof of this result is provided in the remainder of this subsection. Since the matrix $A_1^{[k]}(\hat{\mathbf{x}})$ depends intrinsically only on the linear $O(1/\varepsilon)$ term in system (\ref{P}), we define the operator $D_1$ as
\begin{equation}\label{A0}
	D_1(\mathbf{x})=\lim_{\varepsilon\to 0^+}\varepsilon\frac{\mathrm{d}\mathbf{x}(t)}{\mathrm{d}t}.
\end{equation}
Applying $D_1$ to systems (\ref{P}) and (\ref{LLES01}), respectively, gives
\begin{align}
	&D_1(\mathbf{x})=A_1\mathbf{x},\label{A1}\\
	&D_1(\mathbf{x}^{[k,\hat{\mathbf{x}}]})=A_1^{[k]}(\hat{\mathbf{x}})\mathbf{x}^{[k,\hat{\mathbf{x}}]}. \label{A2}
\end{align}
To proceed, we first characterize the structure of $A_1^{[k]}(\hat{\mathbf{x}})$ via the following two lemmas.

\begin{Lemma}\label{LemAA1}
Suppose the ordering scheme for the extension variables is defined by~(\ref{LLEV03}). Then the following properties of $A_1^{[k]}(\hat{\mathbf{x}})$ hold:

(\romannumeral1) All matrices $A_1^{[k]}(\hat{\mathbf{x}})$ generated with different reference points $\hat{\mathbf{x}}$ are similar.

(\romannumeral2) There exists an invertible lower triangular matrix $S^{[k]}(\hat{\mathbf{x}})$, such that
\begin{equation}\label{LemAA1-03}
  A_1^{[k]}(\mathbf{0}_{(d+1)\times 1})=(S^{[k]}(\hat{\mathbf{x}}))^{-1}A_1^{[k]}(\hat{\mathbf{x}})S^{[k]}(\hat{\mathbf{x}}).
\end{equation}

(\romannumeral 3) Suppose $S^{[k]}(\hat{\mathbf{x}})$ and its inverse $(S^{[k]}(\hat{\mathbf{x}}))^{-1}$ are partitioned in block form as
\begin{equation}\label{LemAA1-04}
\begin{aligned}
  S^{[k]}(\hat{\mathbf{x}})=&
  \left[\begin{array}{cccc}
          S_{0,0}(\hat{\mathbf{x}}) &  &  & \\
          S_{1,0}(\hat{\mathbf{x}}) & S_{1,1}(\hat{\mathbf{x}}) &  & \\
          \vdots &  & \ddots & \\
          S_{k,0}(\hat{\mathbf{x}}) & S_{k,1}(\hat{\mathbf{x}}) & \ldots & S_{k,k}(\hat{\mathbf{x}})
        \end{array}\right],\\
  (S^{[k]}(\hat{\mathbf{x}}))^{-1}=&
  \left[\begin{array}{cccc}
          \bar{S}_{0,0}(\hat{\mathbf{x}}) &  &  & \\
          \bar{S}_{1,0}(\hat{\mathbf{x}}) & \bar{S}_{1,1}(\hat{\mathbf{x}}) &  & \\
          \vdots &  & \ddots & \\
          \bar{S}_{k,0}(\hat{\mathbf{x}}) & \bar{S}_{k,1}(\hat{\mathbf{x}}) & \ldots & \bar{S}_{k,k}(\hat{\mathbf{x}})
        \end{array}\right].
\end{aligned}
\end{equation}
Then each subblock satisfies $S_{j,i},\bar{S}_{j,i}\in(\mathbb{P}^{j-i}[\hat{\mathbf{x}}]/\mathbb{P}^{j-i-1}[\hat{\mathbf{x}}])^{D^{[[j]]}\times D^{[[i]]}}$; that is, the entries of $S_{j,i}(\hat{\mathbf{x}})$ and $\bar{S}_{j,i}(\hat{\mathbf{x}})$ are homogeneous polynomials of degree $j-i$ in the variable $\hat{\mathbf{x}}$. In particular, we have
\begin{equation}\label{LemAA1-01}
  S_{j,j}(\hat{\mathbf{x}})=\bar{S}_{j,j}(\hat{\mathbf{x}})=I_{D^{[[j]]}},\quad S_{1,0}=-\hat{\mathbf{x}},\quad \bar{S}_{1,0}=\hat{\mathbf{x}}.
\end{equation}
\end{Lemma}

\begin{proof}
\textbf{Proof of (\romannumeral1):} We note that the components in local linear extension variables with different choices of reference point $\hat{\mathbf{x}}$ in Definition \ref{LLEV} span the same polynomial space. Therefore, the matrices $A_1^{[k]}(\hat{\mathbf{x}})$ derived from the action of $D_1$ on $P_\mathbf{x}^{[k,\hat{\mathbf{x}}]}$ with different $\hat{\mathbf{x}}$ exhibit the similarity relation.

\textbf{Proof of (\romannumeral2):} It suffices to consider the transition matrix from the basis $P_\mathbf{x}^{[k,\mathbf{0}]}$ to $P_\mathbf{x}^{[k,\hat{\mathbf{x}}]}$ and denote it by $S^{[k]}(\hat{\mathbf{x}})$. For any degree-$j$ polynomial in $P_\mathbf{x}^{[k,\hat{\mathbf{x}}]}$ of the form (\ref{LLEVIdk01}), its expansion in monomials of degree at most $j$ from $P_\mathbf{x}^{[k,\mathbf{0}]}$ shows that, under the ordering scheme (\ref{LLEV03}), the matrix $S^{[k]}(\hat{\mathbf{x}})$ has a lower-triangular structure.

\textbf{Proof of (\romannumeral3):} From the identity $\mathbf{x}^{[[1,\hat{\mathbf{x}}]]}=\mathbf{x}^{[[1,\mathbf{0}]]}-\hat{\mathbf{x}}\cdot 1$, we immediately obtain $\bar{S}_{1,0}=-S_{1,0}=\hat{\mathbf{x}}$. Now take a polynomial of the form (\ref{LLEVIdk01}) with a multi-index $|\alpha|=j\geq 1$. The coefficient of the degree-$i$ monomial in its expansion is a homogeneous polynomial of degree $j-i$ in $\hat{\mathbf{x}}$. Consequently, $S_{j,i}\in(\mathbb{P}^{j-i}[\hat{\mathbf{x}}]/\mathbb{P}^{j-i-1}[\hat{\mathbf{x}}])^{D^{[[j]]}\times D^{[[i]]}}$. In particular, since the leading term has coefficient $1$, we have $S_{j,j}(\hat{\mathbf{x}})=I_{D^{[[j]]}}$. Conversely, a degree-$j$ monomial in $P_\mathbf{x}^{[k,\mathbf{0}]}$ can be written as
\begin{equation*}
  \mathbf{x}^\alpha=((x_{\alpha_1}-\hat{x}_{\alpha_1})+\hat{x}_{\alpha_1})\cdots((x_{\alpha_j}-\hat{x}_{\alpha_j})+\hat{x}_{\alpha_j}),
\end{equation*}
and expanded as a linear combination of polynomials in $P_\mathbf{x}^{[k,\hat{\mathbf{x}}]}$. The same reasoning yields $\bar{S}_{j,i}\in(\mathbb{P}^{j-i}[\hat{\mathbf{x}}]/\mathbb{P}^{j-i-1}[\hat{\mathbf{x}}])^{D^{[[j]]}\times D^{[[i]]}}$ and $\bar{S}_{j,j}(\hat{\mathbf{x}})=I_{D^{[[j]]}}$.
\end{proof}

\begin{Lemma}\label{LemA2}
Suppose the ordering scheme for the extension variables is defined by~(\ref{LLEV03}). Then the matrix $A_1^{[k]}(\hat{\mathbf{x}})$ has the block bidiagonal structure
\begin{equation}\label{A4}
A_1^{[k]}(\hat{\mathbf{x}})=\left[\begin{array}{cccc}
            0 &  &  &   \\
            B_1^{[[1]]}(\hat{\mathbf{x}}) & A_1^{[[1]]} &  &  \\
             & \ddots & \ddots &   \\
             &   & B_1^{[[k]]}(\hat{\mathbf{x}}) & A_1^{[[k]]}
          \end{array}\right],
\end{equation}
where $A_1^{[[j]]}$ and $B_1^{[[j]]}(\hat{\mathbf{x}})(j=1,\cdots,k)$ are matrices of size $D^{[[j]]}\times D^{[[j]]}$ and $D^{[[j]]}\times D^{[[j-1]]}$, respectively. In particular, $A_1^{[[1]]}=A_1$.
\end{Lemma}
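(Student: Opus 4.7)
The strategy is to read off the block structure of $A_1^{[k]}(\hat{\mathbf{x}})$ directly from the key identity (\ref{ParDiff02}) by tracking the polynomial degree of every term on the right-hand side. Under the ordering (\ref{LLEV03}), the components of $\mathbf{x}^{[k,\hat{\mathbf{x}}]}$ are grouped into blocks $\mathbf{x}^{[[j,\hat{\mathbf{x}}]]}$ of fixed degree $j=0,1,\ldots,k$, so it is enough to show that the $O(\tfrac{1}{\varepsilon})$ part of $\tfrac{d}{dt}(\mathbf{x}-\hat{\mathbf{x}})^\alpha$ for a multi-index $\alpha$ with $|\alpha|=j$ involves only polynomials of degrees $j$ and $j-1$; everything else is automatic from how the coefficient matrices are extracted.

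First I would inspect the two $O(\tfrac{1}{\varepsilon})$ sums on the right of (\ref{ParDiff02}). Since $\chi(\alpha;l)$ has length $j-1$, the factor $(\mathbf{x}-\hat{\mathbf{x}})^{\chi(\alpha;l)}(x_m-\hat{x}_m)$ appearing in the first sum has degree exactly $j$, while $(\mathbf{x}-\hat{\mathbf{x}})^{\chi(\alpha;l)}$ in the second sum has degree exactly $j-1$. After aggregating over $l$ and $m$ and combining identical polynomials in $P_{\mathbf{x}}^{[k,\hat{\mathbf{x}}]}$, the first sum contributes entries only to the block column indexed by degree $j$, which I name $A_1^{[[j]]}$, while the second sum contributes entries only to the block column indexed by degree $j-1$ and acquires its dependence on $\hat{\mathbf{x}}$ through the factors $(A_1)_{\alpha_l m}\hat{x}_m$, giving the block $B_1^{[[j]]}(\hat{\mathbf{x}})$. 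No other block columns ever appear in block row $j$, which is precisely the bidiagonal form (\ref{A4}), with the stated block sizes $D^{[[j]]}\times D^{[[j]]}$ and $D^{[[j]]}\times D^{[[j-1]]}$ coming from the cardinalities of the representative sets $\tilde{\mathcal{I}}_{d+1}^{[[j]]}$ and $\tilde{\mathcal{I}}_{d+1}^{[[j-1]]}$.

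Next I would handle the boundary case $j=0$: the only degree-$0$ polynomial is the constant $1$, whose time derivative vanishes identically, so the first row of $A_1^{[k]}(\hat{\mathbf{x}})$ is zero, which accounts for the $0$ in the top-left corner of (\ref{A4}). For the identification $A_1^{[[1]]}=A_1$, I would specialize (\ref{ParDiff02}) to $|\alpha|=1$, in which case $\chi(\alpha;1)$ is the empty index, $(\mathbf{x}-\hat{\mathbf{x}})^{\chi(\alpha;1)}=1$, and the first sum collapses to $\sum_m (A_1)_{\alpha_1 m}(x_m-\hat{x}_m)$, which is the $\alpha_1$-th row of $A_1(\mathbf{x}-\hat{\mathbf{x}})$. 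Since $\mathbf{x}^{[[1,\hat{\mathbf{x}}]]}=\mathbf{x}-\hat{\mathbf{x}}$ by Definition \ref{LLEV}, the diagonal block acting on the degree-$1$ components is exactly $A_1$.

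The main technical obstacle — and the only part requiring genuine care — is the coefficient aggregation turning the multi-indexed right-hand side of (\ref{ParDiff02}) into a unique matrix entry. Different pairs $(l,m)$ can produce the same representative polynomial $(\mathbf{x}-\hat{\mathbf{x}})^{\overline{\beta}}\in P_{\mathbf{x}}^{[k,\hat{\mathbf{x}}]}$, because the multi-index $\chi(\alpha;l)$ concatenated with $(m)$ (respectively $\chi(\alpha;l)$ alone) must be reordered into its representative form in $\tilde{\mathcal{I}}_{d+1}^{[[j]]}$ (respectively $\tilde{\mathcal{I}}_{d+1}^{[[j-1]]}$) via Definition \ref{Idk}. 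I would write this combinatorial bookkeeping out explicitly: fix a target representative $\overline{\beta}$ in the appropriate degree block and sum $(A_1)_{\alpha_l m}$ (or $(A_1)_{\alpha_l m}\hat{x}_m$) over all $(l,m)$ such that $[\chi(\alpha;l),m]=[\overline{\beta}]$ (resp. $[\chi(\alpha;l)]=[\overline{\beta}]$) to obtain the entry of $A_1^{[[j]]}$ (resp. $B_1^{[[j]]}(\hat{\mathbf{x}})$). Once this verification is in place, the bidiagonal structure and the identification $A_1^{[[1]]}=A_1$ follow immediately.
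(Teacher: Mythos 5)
Your proof is correct and takes essentially the same approach as the paper: the paper phrases the computation via the operator $D_1$ and its Leibniz rule, obtaining the identity (\ref{A3}), but that identity is exactly the $O(\tfrac{1}{\varepsilon})$ part of (\ref{ParDiff02}) that you read off directly, and the degree-tracking argument (degree-$j$ terms $\to$ $A_1^{[[j]]}$, degree-$(j-1)$ terms $\to$ $B_1^{[[j]]}(\hat{\mathbf{x}})$, with the $j=0$ row vanishing) is the same. Your explicit verification that $A_1^{[[1]]}=A_1$ and the remark on aggregating coefficients over $(l,m)$ pairs sharing a representative multi-index are finer-grained than the paper's "this fact determines the unique matrices," but they are details the paper's argument implicitly contains.
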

\begin{proof}
For any multi-index $\alpha\in\mathcal{I}_{d+1}^{[[k]]}(k\geq 1)$, we have
\begin{align*}
D_1((\mathbf{x}-\hat{\mathbf{x}})^\alpha)=&\sum_{j=1}^{k}(x_{\alpha_1}-\hat{x}_{\alpha_1}) \cdots D_1(x_{\alpha_j}-\hat{x}_{\alpha_j})\cdots(x_{\alpha_k}-\hat{x}_{\alpha_k})\\
=&\sum_{j=1}^{k}(x_{\alpha_1}-\hat{x}_{\alpha_1}) \cdots\left(\sum_{l=1}^{d+1}(A_1)_{\alpha_jl}x_{l}\right)\cdots(x_{\alpha_k}-\hat{x}_{\alpha_k}).
\end{align*}
Hence
\begin{equation}\label{A3}
\begin{split}
D_1((\mathbf{x}-\hat{\mathbf{x}})^\alpha)=&\sum_{j=1}^{k}\sum_{l=1}^{d+1}(A_1)_{\alpha_jl}(x_{\alpha_1}-\hat{x}_{\alpha_1})\cdots(x_{l}-\hat{x}_{l})\cdots(x_{\alpha_k}-\hat{x}_{\alpha_k})\\
&\quad +\sum_{j=1}^{k}\sum_{l=1}^{d+1}(A_1)_{\alpha_jl}\hat{x}_{l} \frac{(x_{\alpha_1}-\hat{x}_{\alpha_1})\cdots(x_{\alpha_k}-\hat{x}_{\alpha_k})}{x_{\alpha_j}-\hat{x}_{\alpha_j}}.
\end{split}
\end{equation}
The first double summation in (\ref{A3}) corresponds to a $k$-th degree polynomial, while the second one represents a $(k-1)$-th degree polynomial. This fact determines the unique matrices $A_1^{[[k]]}\in\mathbb{C}^{D^{[[k]]}\times D^{[[k]]}}$ and $B_1^{[[k]]}(\hat{\mathbf{x}})\in\mathbb{C}^{D^{[[k]]}\times D^{[[k-1]]}}$ satisfying
\begin{equation*}
D_1(\mathbf{x}^{[[k,\hat{\mathbf{x}}]]})=A_1^{[[k]]}\mathbf{x}^{[[k,\hat{\mathbf{x}}]]}+B_1^{[[k]]}(\hat{\mathbf{x}})\mathbf{x}^{[[k-1,\hat{\mathbf{x}}]]}.
\end{equation*}
In the case of $k = 0$, the corresponding row in $A_1^{[k]}(\hat{\mathbf{x}})$ is entirely zero. Therefore, $A_1^{[k]}(\hat{\mathbf{x}})$ possesses the structure (\ref{A4}).
\end{proof}

Without loss of generality, we set $\hat{\mathbf{x}} = \textbf{0}_{(d+1) \times 1}$ in the subsequent derivations. This choice is justified by conclusion (\romannumeral1) of Lemma \ref{LemAA1}, which guarantees that $A_1^{[k]}(\hat{\mathbf{x}})$ and $A_1^{[k]}(\textbf{0})$ share identical spectral properties and diagonalizability. Under this setting, it follows from (\ref{A3}) that all $B_1^{[[j]]}(\textbf{0})$ for $j = 1, \cdots, k$ reduce to zero matrices. As a result, expression (\ref{A4}) simplifies into the block-diagonal form
\begin{equation}\label{LemAA1-02}
	A_1^{[k]}(\textbf{0}) = \operatorname{diag}\Big\{0, A_1^{[[1]]}, \cdots, A_1^{[[k]]}\Big\},
\end{equation}
and the system of linear equations for $\mathbf{x}^{[[j,\mathbf{0}]]}$ becomes
\begin{equation}\label{A5}
	D_1\big(\mathbf{x}^{[[j,\mathbf{0}]]}\big) = A_1^{[[j]]} \mathbf{x}^{[[j,\mathbf{0}]]}, \quad j = 1, \cdots, k.
\end{equation}

To analyze the eigenvalues and diagonalizability of $A_1^{[k]}(\mathbf{0})$, it suffices to examine these properties for each diagonal block $A_1^{[[j]]}$ with $j=1,\cdots,k$. However, conventional analytical approaches—such as those based on characteristic polynomials and eigenspace decomposition—are not applicable here, due to the absence of explicit closed-form expressions for the matrices $A_1^{[[j]]}$.

To overcome this difficulty, we extend the action of the operator $D_1$ to a higher-dimensional space endowed with a tensor product structure. This allows us to investigate the relationship between a specific matrix representation of the extended operator and the matrices $A_1^{[[j]]}$. The following discussion illustrates this procedure for the case of $A_1^{[[k]]}$; the reasoning applies analogously to all other blocks $A_1^{[[j]]}$ with $j=1,\cdots,k-1$.

{Let $V=\mathbb{P}^1[\mathbf{x}]/\mathbb{P}^0[\mathbf{x}]$ with the basis $P_{\mathbf{x}}^{[[1,\mathbf{0}]]}$. Equation (\ref{A1}) defines a linear mapping on $V$ induced by $D_1$, whose matrix representation with respect to $P_{\mathbf{x}}^{[[1,\mathbf{0}]]}$ is $A_1$.} Finally, we denote by $V^k=V^{\otimes k}$ the $k$-fold tensor product space of $V$.

\begin{Lemma}\label{LemA3}
The operator $D_1$ can be extended to a linear mapping on the tensor product space $V^k$. Its matrix representation with respect to the tensor product basis $\{x_{\alpha_1}\otimes\cdots\otimes x_{\alpha_k}:\alpha\in\mathcal{I}_{d+1}^{[[k]]}\}$ is given by
\begin{equation}\label{A6}
  \mathcal{A}^{[[k]]}=\sum_{j=1}^{k}I_{d+1}^{\otimes (j-1)}\otimes A_1 \otimes I_{d+1}^{\otimes (k-j)},\quad j=1,\cdots,k,
\end{equation}
where the superscript $~^{\otimes i}$ denotes the $i$-fold tensor product.
\end{Lemma}

\begin{proof}
For any tensor $\eta = \eta_1 \otimes \cdots \otimes \eta_k \in V^k$, the definition of $D_1$ implies
\begin{equation*}
  D_1(\eta_1\cdot\eta_2\cdots \eta_k)=\sum_{j=1}^{k}\eta_1\cdots \eta_{j-1}\cdot D_1(\eta_j)\cdot\eta_{j+1}\cdots \eta_k.
\end{equation*}
This motivates the natural extension of $D_1$ to a mapping $D_1: V^k \to V^k$ defined by
\begin{equation}\label{A7}
	D_1(\eta_1 \otimes \cdots \otimes \eta_k) = \sum_{j=1}^{k} \eta_1 \otimes \cdots \otimes \eta_{j-1} \otimes D_1(\eta_j) \otimes \eta_{j+1} \otimes \cdots \otimes \eta_k,
\end{equation}
which is clearly linear. Now, taking $\eta$ to be a basis element of the form $x_{\alpha_1} \otimes \cdots \otimes x_{\alpha_k}$ with $\alpha \in \mathcal{I}_{d+1}^{[[k]]}$, and applying equations (\ref{A1}) and (\ref{A7}), we obtain
\begin{equation*}
  D_1(x_{\alpha_1}\otimes\cdots\otimes x_{\alpha_k})=\sum_{j=1}^{k}\sum_{l=1}^{d+1} (A_1)_{\alpha_jl}x_{\alpha_1}\otimes\cdots\otimes x_{\alpha_{j-1}}\otimes x_l\otimes x_{\alpha_{j+1}}\otimes\cdots\otimes x_{\alpha_k}.
\end{equation*}
This expression directly yields the matrix representation given in (\ref{A6}).
\end{proof}

From the diagonalizability of $A_1$ and the tensor product structure expressed in~(\ref{A6}), it follows directly that $\mathcal{A}^{[[k]]}$ is also diagonalizable. Moreover, formula (\ref{A6}) implies the following conclusion concerning its eigenvalues (see also \cite{H2008}).

\begin{Lemma}\label{LemA4}
	Let $\lambda(A_1)=\{\lambda_1,\cdots,\lambda_{d+1}\}\subset\mathbb{C}$ denote the spectrum of $A_1$. Then the spectrum of $\mathcal{A}^{[[k]]}$ is given by
	$$
	\lambda(\mathcal{A}^{[[k]]}) = \{\lambda_{j_1}+\cdots+\lambda_{j_k} \mid \lambda_{j_1},\cdots,\lambda_{j_k}\in\lambda(A_1)\}.$$
Consequently, all eigenvalues of $\mathcal{A}^{[[k]]}$ have zero real part.
\end{Lemma}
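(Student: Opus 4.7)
The plan is to recognize $\mathcal{A}^{[[k]]}$ as a Kronecker sum of $k$ copies of $A_1$ and to diagonalize it directly on a tensor product of eigenvectors. The key observation is that the summands $\widehat{\mathcal{A}}_1^{[[k]]},\ldots,\widehat{\mathcal{A}}_k^{[[k]]}$ pairwise commute, since each acts nontrivially (by $A_1$) only in a distinct tensor factor and as the identity on the rest. Consequently, they admit a simultaneous diagonalization, after which the spectrum of the sum is read off as the set of coordinatewise sums of eigenvalues of $A_1$.

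First, I would invoke the standing assumption that $A$ is diagonalizable: the block form (\ref{P02}) shows that $A_1$ is also diagonalizable, with $\lambda(A_1)=\lambda(A)\cup\{0\}=\{\lambda_1,\ldots,\lambda_{d+1}\}$. Fix a basis $\{v_1,\ldots,v_{d+1}\}$ of $V\cong\mathbb{C}^{d+1}$ consisting of eigenvectors with $A_1 v_j=\lambda_j v_j$, and for each multi-index $\alpha\in\mathcal{I}_{d+1}^{[[k]]}$ set
\[
w_\alpha := v_{\alpha_1}\otimes\cdots\otimes v_{\alpha_k}\in V^k.
\]
By the universal property of the tensor product, $\{w_\alpha:\alpha\in\mathcal{I}_{d+1}^{[[k]]}\}$ is a basis of $V^k$, containing exactly $(d+1)^k=\dim V^k$ elements.

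Next, I would apply $\widehat{\mathcal{A}}_j^{[[k]]}$ directly to $w_\alpha$. By the slot-wise definition in Lemma \ref{LemA3},
\[
\widehat{\mathcal{A}}_j^{[[k]]} w_\alpha = v_{\alpha_1}\otimes\cdots\otimes (A_1 v_{\alpha_j})\otimes\cdots\otimes v_{\alpha_k} = \lambda_{\alpha_j}\, w_\alpha,
\]
so summing over $j$ and using (\ref{A6}) yields $\mathcal{A}^{[[k]]} w_\alpha = (\lambda_{\alpha_1}+\cdots+\lambda_{\alpha_k})\,w_\alpha$. Since the $w_\alpha$ form a basis of $V^k$ consisting entirely of eigenvectors of $\mathcal{A}^{[[k]]}$, they account for the whole spectrum with multiplicity, giving
\[
\lambda(\mathcal{A}^{[[k]]})=\{\lambda_{j_1}+\cdots+\lambda_{j_k}:\lambda_{j_1},\ldots,\lambda_{j_k}\in\lambda(A_1)\}
\]
as asserted.

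There is no serious obstacle here, as this is the classical Kronecker-sum spectral formula. The only point deserving minor attention is the compatibility between the matrix representation of $D_1$ in the tensor product basis $\{x_{\alpha_1}\otimes\cdots\otimes x_{\alpha_k}\}$ used in Lemma \ref{LemA3} and the basis-free action on $V^k$ employed above: a change of basis in $V$ (from $\{x_j\}$ to the eigenbasis $\{v_j\}$) induces the corresponding $k$-fold tensor product change of basis on $V^k$, which leaves the spectrum of $\mathcal{A}^{[[k]]}$ invariant. Note that this argument simultaneously establishes that $\mathcal{A}^{[[k]]}$ is diagonalizable, a fact that will be reused downstream in the proof of Lemma \ref{Lem3-5}.
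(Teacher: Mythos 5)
Your proof is correct and is essentially the same argument the paper invokes: the paper simply cites the tensor-product (Kronecker-sum) spectral formula from a reference, while you unpack that classical result by diagonalizing $A_1$ and exhibiting the explicit eigenbasis $\{v_{\alpha_1}\otimes\cdots\otimes v_{\alpha_k}\}$ of $V^k$. The added observation that this simultaneously establishes diagonalizability of $\mathcal{A}^{[[k]]}$ is a sensible bonus, since that fact is indeed used downstream in the proof of Lemma~\ref{Lem3-5}.
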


The preceding two lemmas establish the desired spectral and diagonalizability properties for the matrix $\mathcal{A}^{[[k]]}$, as stated in Proposition~\ref{Lem3-5}. We now turn to the relationship between $\mathcal{A}^{[[k]]}$ and our target matrix $A_1^{[[k]]}$. These two matrices arise from representing the operator $D_1$ on different spaces: $\mathcal{A}^{[[k]]}$ acts on the tensor product space $V^k$, while $A_1^{[[k]]}$ acts on the quotient space $\mathbb{P}^k[\mathbf{x}]/\mathbb{P}^{k-1}[\mathbf{x}]$. These spaces are not isomorphic. For example, the distinct tensors $x_1 \otimes x_2$ and $x_2 \otimes x_1$ in $V^2$ both correspond to the same polynomial $x_1x_2$ in $\mathbb{P}^2[\mathbf{x}]/\mathbb{P}^1[\mathbf{x}]$. However, their associated multi-indices belong to the same equivalence class defined in Definition~\ref{Idk}. Building on this connection, we now prove the following structural lemma.

\begin{Lemma}\label{LemA6}
	There exists a basis of $V^k$ with respect to which the matrix representation of $D_1$ is a block upper triangular matrix of the form
	\begin{equation}\label{A9}
		\begin{bmatrix}
			A_1^{[[k]]} & * \\
			 & *
		\end{bmatrix}.
	\end{equation}
\end{Lemma}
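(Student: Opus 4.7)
The strategy is to locate within $V^k$ a $D_1$-invariant subspace whose restriction matrix is exactly $A_1^{[[k]]}$, and then to assemble the global basis by listing this subspace first. The natural candidate is the symmetric tensor subspace $\mathrm{Sym}^k V \subset V^k$, which is isomorphic to the homogeneous polynomial quotient $\mathbb{P}^k[\mathbf{x}]/\mathbb{P}^{k-1}[\mathbf{x}]$ via the multiplication map. Under this identification, the tensor-extended operator $D_1$ from Lemma \ref{LemA3} should coincide with the polynomial action of $D_1$ that underlies the definition of $A_1^{[[k]]}$ in (\ref{A5}).

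The first step is to verify that the extended $D_1$ on $V^k$ commutes with the natural $S_k$-action permuting tensor factors. This follows directly from the symmetric form of the Leibniz-type formula (\ref{A7}): permuting the factors $\eta_1,\ldots,\eta_k$ merely relabels the summands, so $\sigma \circ D_1 = D_1 \circ \sigma$ for every $\sigma \in S_k$. Consequently $\mathrm{Sym}^k V$, being the image of the symmetrizer $P_{\mathrm{sym}} = \frac{1}{k!}\sum_{\sigma \in S_k}\sigma$, is $D_1$-invariant. The second step is to make explicit the isomorphism $\phi : \mathrm{Sym}^k V \to \mathbb{P}^k[\mathbf{x}]/\mathbb{P}^{k-1}[\mathbf{x}]$ induced by the multiplication map $x_{\alpha_1}\otimes\cdots\otimes x_{\alpha_k}\mapsto x_{\alpha_1}\cdots x_{\alpha_k}$; under $\phi$, each symmetrized tensor $P_{\mathrm{sym}}(x_{\overline{\alpha}_1}\otimes\cdots\otimes x_{\overline{\alpha}_k})$ with $\overline{\alpha}\in\tilde{\mathcal{I}}_{d+1}^{[[k]]}$ corresponds (up to a nonzero scalar absorbed into the basis normalization) to the monomial $(\mathbf{x})^{\overline{\alpha}}$. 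The Leibniz rule, applied both to the polynomial $D_1$ and to the tensor version (\ref{A7}), yields $\phi \circ D_1 = D_1 \circ \phi$, so the matrix of $D_1|_{\mathrm{Sym}^k V}$ in the symmetrized basis equals the matrix of the polynomial action of $D_1$ in the monomial basis $\{(\mathbf{x})^{\overline{\alpha}}\}$, which by (\ref{A5}) is exactly $A_1^{[[k]]}$.

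Finally, I would assemble the desired basis of $V^k$ by placing the symmetrized generators $\{P_{\mathrm{sym}}(x_{\overline{\alpha}_1}\otimes\cdots\otimes x_{\overline{\alpha}_k}) : \overline{\alpha}\in\tilde{\mathcal{I}}_{d+1}^{[[k]]}\}$ first and then appending any basis for a complementary subspace (for instance the image of $I - P_{\mathrm{sym}}$) to complete it to a basis of the full space $V^k$. The $D_1$-invariance of $\mathrm{Sym}^k V$ forces the lower-left block of the resulting matrix representation to vanish, while the top-left block is $A_1^{[[k]]}$ by the preceding identification, producing precisely the block upper triangular form (\ref{A9}). The main technical subtlety is the bookkeeping required to verify $\phi\circ D_1 = D_1\circ\phi$ at the level of matrix entries: although conceptually immediate from the Leibniz rule, one must expand $P_{\mathrm{sym}}$ carefully and match coefficients through the representatives of (\ref{LLEVIdk02}) so that the entries genuinely coincide with those generated by the element-wise construction of $A_1^{[[k]]}$ described in Section \ref{Sec2}.
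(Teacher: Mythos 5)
Your proof is correct, but it takes a genuinely different route from the paper. You invoke the symmetrizer $P_{\mathrm{sym}}=\frac{1}{k!}\sum_{\sigma\in S_k}\sigma$, observe from (\ref{A7}) that $D_1$ commutes with the $S_k$-action on $V^k$ (hence with $P_{\mathrm{sym}}$), and conclude that the equivariant decomposition $V^k=\mathrm{Sym}^kV\oplus\ker(P_{\mathrm{sym}})$ is preserved by $D_1$; the multiplication isomorphism $\mathrm{Sym}^kV\cong\mathbb{P}^k[\mathbf{x}]/\mathbb{P}^{k-1}[\mathbf{x}]$ then identifies the top-left block with $A_1^{[[k]]}$. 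The paper instead constructs an explicit unipotent change of basis $\Phi$ (see (\ref{A11})--(\ref{A12})) which sends $x_{\beta_1}\otimes\cdots\otimes x_{\beta_k}$ for a non-representative $\beta\sim\overline{\alpha}$ to $x_{\overline{\alpha}_1}\otimes\cdots\otimes x_{\overline{\alpha}_k}+x_{\beta_1}\otimes\cdots\otimes x_{\beta_k}$, and verifies by direct multi-index computation in (\ref{A13})--(\ref{A14}) that the complementary subspace $(\bar{V}^k)^\perp$ is invariant under the conjugated operator. Your argument is more conceptual and actually gives a strictly stronger conclusion---block \emph{diagonal}, since both summands of the symmetrizer decomposition are $D_1$-invariant, whereas the paper's $\Phi$ only makes the complement invariant, yielding block triangular. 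The paper's construction is in exchange entirely elementary and self-contained (no representation theory of $S_k$), and the entry-wise matching between (\ref{A10}) and (\ref{A13}) is essentially the same bookkeeping you flag as the "main technical subtlety" of your own identification $\phi\circ D_1=D_1\circ\phi$. One small remark: your hedge ``up to a nonzero scalar absorbed into the basis normalization'' is unnecessary and slightly dangerous---by commutativity of polynomial multiplication, the multiplication map sends $P_{\mathrm{sym}}(x_{\overline{\alpha}_1}\otimes\cdots\otimes x_{\overline{\alpha}_k})$ to $\mathbf{x}^{\overline{\alpha}}$ with scalar exactly $1$; a nonuniform rescaling of the basis would replace $A_1^{[[k]]}$ by a diagonal conjugate of it, which is not what the lemma asserts, so it is worth observing that the scalars are uniformly $1$ rather than waving them away.
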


\begin{proof}
For any multi-index $\alpha\in\mathcal{I}_{d+1}^{[[k]]}$, positive integers $j\in\{1,\cdots,k\}$, and $l\in\{1,\cdots,d+1\}$, define $\iota(\alpha;j,l):=(\alpha_1,\cdots,\alpha_{j-1},l,\alpha_{j+1},\cdots,\alpha_k)\in\mathcal{I} _{d+1}^{[[k]]}$ as the multi-index obtained by replacing the $j$-th entry of $\alpha$ with $l$. Let $\bar{\iota}(\alpha;j,l)\in\tilde{\mathcal{I}}_{d+1}^{[[k]]}$ denote the representative of the equivalence class containing $\iota(\alpha;j,l)$, and denote its $i$-th component by $\bar{\iota}(\alpha;j,l)_i$.

Considering the polynomials generated by multi-indices in $\tilde{\mathcal{I}}_{d+1}^{[[k]]}$ on both sides of (\ref{A3}), we simplify this expression to
\begin{equation}\label{A10}
  D_1(\mathbf{x}^{\overline{\alpha}})=\sum_{j=1}^{k}\sum_{l=1}^{d+1}(A_1)_{\bar{\alpha}_jl} x_{\bar{\iota}(\overline{\alpha};j,l)_1}\cdots x_{\bar{\iota}(\overline{\alpha};j,l)_k}
  =\sum_{j=1}^{k}\sum_{l=1}^{d+1}(A_1)_{\bar{\alpha}_jl}\mathbf{x}^{\bar{\iota}(\overline{\alpha};j,l)}.
\end{equation}
This equation provides a linear representation of $D_1$ with respect to the basis $P_{\mathbf{x}}^{[[k,\mathbf{0}]]}$ and it coincides precisely with the row of (\ref{A2}) corresponding to $\mathbf{x}^{\overline{\alpha}}$. {We now construct a basis for $V^k$ under which the representation of $D_1$ takes the form (\ref{A9}).}

Any tensor $\eta\in V^k$ can be written as
\begin{equation*}
  \eta=\sum_{\alpha\in\mathcal{I}_{d+1}^{[[k]]}}c_\alpha x_{\alpha_1}\otimes\cdots\otimes x_{\alpha_k},\quad c_\alpha\in\mathbb{C}.
\end{equation*}
Define a linear mapping $\Phi:V^k\to V^k$ by
\begin{equation}\label{A11}
  \Phi(\eta)=\sum_{\overline{\alpha}\in\tilde{\mathcal{I}}_{d+1}^{[[k]]}}\sum_{\beta\in[\overline{\alpha}]}c_\beta x_{\bar{\alpha}_1} \otimes\cdots\otimes x_{\bar{\alpha}_k} + \sum_{\overline{\alpha}\in\tilde{\mathcal{I}}_{d+1}^{[[k]]}}\sum_{\beta\in[\overline{\alpha}]\setminus\{\overline{\alpha}\}}c_\beta x_{\beta_1} \otimes\cdots\otimes x_{\beta_k}.
\end{equation}
Its inverse is given explicitly by
\begin{equation}\label{A12}
  \Phi^{-1}(\eta)=\sum_{\overline{\alpha}\in\tilde{\mathcal{I}}_{d+1}^{[[k]]}}\left(c_{\overline{\alpha}}-\sum_{\beta\in[\overline{\alpha}]\setminus\{\overline{\alpha}\}}c_\beta\right)x_{\bar{\alpha}_1} \otimes\cdots\otimes x_{\bar{\alpha}_k} + \sum_{\overline{\alpha}\in\tilde{\mathcal{I}}_{d+1}^{[[k]]}}\sum_{\beta\in[\overline{\alpha}]\setminus\{\overline{\alpha}\}}c_\beta x_{\beta_1} \otimes\cdots\otimes x_{\beta_k}.
\end{equation}
The invertibility demonstrates that the set $\{\Phi(x_{\alpha_1}\otimes\cdots\otimes x_{\alpha_k}):\alpha\in\mathcal{I}_{d+1}^{[[k]]}\}$ forms a basis for $V^k$. {We define a subspace by
\begin{equation*}
\bar{V}^k := \operatorname{span}\{x_{\bar{\alpha}_1} \otimes \cdots \otimes x_{\bar{\alpha}_k} : \overline{\alpha} \in \tilde{\mathcal{I}}_{d+1}^{[[k]]}\} \subset V^k,
\end{equation*}
which is naturally isomorphic to the quotient space $\mathbb{P}^k[\mathbf{x}]/\mathbb{P}^{k-1}[\mathbf{x}]$. The equation (\ref{A11}) implies $\bar{V}^k$ is invariant under $\Phi$ (and $\Phi^{-1}$). In the following, we prove that $\{\Phi(x_{\alpha_1}\otimes\cdots\otimes x_{\alpha_k}):\alpha\in\mathcal{I}_{d+1}^{[[k]]}\}$ is the desired basis. To verify this fact,} we compute the action of $\Phi D_1\Phi^{-1}$ on the tensor induced by a given multi-index in $\mathcal{I}_{d+1}^{[[k]]}$, depending on whether the multi-index is a representative element.

\textbf{Case 1.} For each $\overline{\alpha}\in\tilde{\mathcal{I}}_{d+1}^{[[k]]}$, applying (\ref{A7}) and (\ref{A12}) gives
\begin{align*}
\Phi D_1\Phi^{-1}\left(x_ {\bar{\alpha}_1}\otimes\cdots\otimes x_{\bar{\alpha}_k}\right)=&\sum_{j=1}^k\Phi\left(x_ {\bar{\alpha}_1}\otimes\cdots\otimes D_1(x_ {\bar{\alpha}_j})\otimes\cdots\otimes x_{\bar{\alpha}_k}\right)\notag\\
=&\sum_{j=1}^k\sum_{l=1}^{d+1}(A_1)_{\bar{\alpha}_jl}\Phi(x_{\bar{\alpha}_1}\otimes\cdots\otimes x_l\otimes\cdots\otimes x_{\bar{\alpha}_k}).\notag
\end{align*}
From the definition of $\tilde{\mathcal{I}}_{d+1}^{[[k]]}$ in (\ref{LLEVIdk02}), we have $(\bar{\alpha}_1,\cdots,\bar{\alpha}_{j-1},l,\bar{\alpha}_{j+1},\cdots,\bar{\alpha}_k)\in\tilde{\mathcal{I}}_{d+1}^{[[k]]}$ if and only if $\bar{\alpha}_{j-1}\leq l\leq \bar{\alpha}_{j+1}$. Together with (\ref{A11}), this yields
\begin{equation}\label{A13}
\begin{split}
\Phi D_1\Phi^{-1}\left(x_ {\bar{\alpha}_1}\otimes\cdots\otimes x_{\bar{\alpha}_k}\right)=&\sum_{j=1}^k \bigg(\sum_{l=1}^{d+1}(A_1)_{\bar{\alpha}_jl}x_{\bar{\iota}(\overline{\alpha};j,l)_1}\otimes\cdots\otimes x_{\bar{\iota}(\overline{\alpha};j,l)_k} \\
&\quad+ \sum_{\substack{l<\bar{\alpha}_{j-1}\\l>\bar{\alpha}_{j+1}}}(A_1)_{\bar{\alpha}_jl} x_{\bar{\alpha}_1}\otimes\cdots\otimes x_l\otimes\cdots\otimes x_{\bar{\alpha}_k}\bigg).
\end{split}
\end{equation}
The first sum lies in $\bar{V}^k$ and coincides with the expression in (\ref{A10}), i.e., the coefficients correspond to a row of $A_1^{[[k]]}$. The second sum belongs to the complementary subspace $(\bar{V}^k)^\perp$. Consequently, with respect to the basis
$\{\Phi(x_{\alpha_1}\otimes\cdots\otimes x_{\alpha_k}):\alpha\in\mathcal{I}_{d+1}^{[[k]]}\}$,
the action of $D_1$ on $\Phi(x_ {\bar{\alpha}_1}\otimes\cdots\otimes x_{\bar{\alpha}_k})$ with $\overline{\alpha}\in\tilde{\mathcal{I}}_{d+1}^{[[k]]}$ is represented by the first block row of the matrix in (\ref{A9}).

\textbf{Case 2.} For any $\beta\in[\overline{\alpha}]\setminus\{\overline{\alpha}\}$, applying equations (\ref{A7}), (\ref{A11}) and (\ref{A12}) yields
\begin{align}\label{A14}
&\Phi D_1\Phi^{-1}\left(x_ {\beta_1}\otimes\cdots\otimes x_{\beta_k}\right)
=\Phi D_1\left(-x_{\bar{\alpha}_1}\otimes\cdots\otimes x_{\bar{\alpha}_k} + x_ {\beta_1}\otimes\cdots\otimes x_{\beta_k}\right)\notag\\
&=\sum_{j=1}^{k}\sum_{l=1}^{d+1}\left(-(A_1)_{\bar{\alpha}_jl}\Phi(x_{\bar{\alpha}_1}\otimes\cdots\otimes x_l\otimes\cdots\otimes x_{\bar{\alpha}_k}) + (A_1)_{\beta_jl}\Phi(x_{\beta_1}\otimes\cdots\otimes x_l\otimes\cdots\otimes x_{\beta_k})\right)\notag\\
&=\sum_{j=1}^{k}\bigg(-\sum_{l=1}^{d+1}(A_1)_{\bar{\alpha}_jl}x_{\bar{\iota}(\overline{\alpha};j,l)_1}\otimes\cdots\otimes x_{\bar{\iota}(\overline{\alpha};j,l)_k} - \sum_{\substack{l<\bar{\alpha}_{j-1}\\l>\bar{\alpha}_{j+1}}}(A_1)_{\bar{\alpha}_jl} x_{\bar{\alpha}_1}\otimes\cdots\otimes x_l\otimes\cdots\otimes x_{\bar{\alpha}_k}\notag\\
&\quad+\sum_{l=1}^{d+1}(A_1)_{\beta_jl}x_{\bar{\iota}(\beta;j,l)_1}\otimes\cdots\otimes x_{\bar{\iota}(\beta;j,l)_k} + \sum_{\iota(\beta;j,l)\notin\tilde{\mathcal{I}}_{d+1}^{[[k]]}}(A_1)_{\beta_jl}x_{\beta_1}\otimes\cdots\otimes x_l\otimes\cdots\otimes x_{\beta_k} \bigg).
\end{align}
Since $\beta\sim \overline{\alpha}$, we have
\begin{equation*}
  \sum_{j=1}^{k}\sum_{l=1}^{d+1}(A_1)_{\bar{\alpha}_jl}x_{\bar{\iota}(\overline{\alpha};j,l)_1}\otimes\cdots\otimes x_{\bar{\iota}(\overline{\alpha};j,l)_k} =\sum_{j=1}^{k}\sum_{l=1}^{d+1}(A_1)_{\beta_jl}x_{\bar{\iota}(\beta;j,l)_1}\otimes\cdots\otimes x_{\bar{\iota}(\beta;j,l)_k}.
\end{equation*}
The remaining terms in (\ref{A14}) lie in $(\bar{V}^k)^\perp$. Consequently, we derive $\Phi D_1 \Phi^{-1}\Big((\bar{V}^k)^\perp\Big)=(\bar{V}^k)^\perp$, which corresponds to the second block row of (\ref{A9}).

Combining both cases, we conclude that $D_1$ admits a block upper triangular matrix representation of the form (\ref{A9}) with respect to the basis $\{\Phi(x_{\alpha_1}\otimes\cdots\otimes x_{\alpha_k}):\alpha\in\mathcal{I}_{d+1}^{[[k]]}\}$, which completes the proof.
\end{proof}
Since (\ref{A6}) and (\ref{A9}) give matrix representations of the operator $D_1$ on the tensor space $V^k$ under different bases,  the results on diagonalization and spectral properties stated in Lemma \ref{LemA3} and Lemma \ref{LemA4} also apply to the matrix (\ref{A9}). Consequently, from the block upper triangular structure established in Lemma \ref{LemA6}, it follows that $A_1^{[[k]]}$ is diagonalizable and all its eigenvalues have zero real part. Together with the earlier analysis of the structure and similarity properties of $A_1^{[k]}(\hat{\mathbf{x}})$, Proposition~\ref{Lem3-5} is proved.

As a direct corollary of Proposition~\ref{Lem3-5}, we obtain the boundedness of the following matrix exponential, which plays a crucial role in the subsequent convergence analysis.
\begin{Lemma}\label{LemAA2}
Let $\hat{\mathbf{x}}$ be a reference point in a bounded domain independent of $\varepsilon$. The matrix $A_1^{[k]}(\hat{\mathbf{x}})$ is generated in the local linear extension system (\ref{LLES01}). Then
\begin{equation*}
  \left\|\mathrm{e}^{\frac{1}{\varepsilon}A_1^{[k]}(\hat{\mathbf{x}})t}\right\|\leq C
\end{equation*}
holds uniformly for any $\varepsilon,t$ and $\hat{\mathbf{x}}$.
\end{Lemma}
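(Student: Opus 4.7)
The plan is to exploit the similarity established in Lemma \ref{LemAA1}(\romannumeral2), which reduces the matrix exponential at an arbitrary reference point $\hat{\mathbf{x}}$ to the one at $\hat{\mathbf{x}} = \mathbf{0}$, combined with the spectral and diagonalizability information from Lemma \ref{Lem3-5}. First, using Lemma \ref{LemAA1}(\romannumeral2), I would write
\begin{equation*}
  A_1^{[k]}(\hat{\mathbf{x}}) = S^{[k]}(\hat{\mathbf{x}})\,A_1^{[k]}(\mathbf{0})\,\bigl(S^{[k]}(\hat{\mathbf{x}})\bigr)^{-1},
\end{equation*}
so that
\begin{equation*}
  \mathrm{e}^{\frac{1}{\varepsilon}A_1^{[k]}(\hat{\mathbf{x}})t} = S^{[k]}(\hat{\mathbf{x}})\,\mathrm{e}^{\frac{1}{\varepsilon}A_1^{[k]}(\mathbf{0})t}\,\bigl(S^{[k]}(\hat{\mathbf{x}})\bigr)^{-1}.
\end{equation*}
The key point is that $A_1^{[k]}(\mathbf{0})$ is independent of $\hat{\mathbf{x}}$, so bounding its exponential yields a $\hat{\mathbf{x}}$-independent estimate after we control the polynomial factors.

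Next, I would bound $\bigl\|\mathrm{e}^{\frac{1}{\varepsilon}A_1^{[k]}(\mathbf{0})t}\bigr\|$ uniformly in $\varepsilon, t$. By the block diagonal structure (\ref{LemAA1-02}), it suffices to bound each $\mathrm{e}^{\frac{1}{\varepsilon}A_1^{[[j]]}t}$. By Lemma \ref{Lem3-5} (applied to each block via the block structure), every $A_1^{[[j]]}$ is diagonalizable with purely imaginary eigenvalues. Thus there exist constant invertible matrices $P_j$ (depending only on $A$ and $k$, not on $\varepsilon, t, \hat{\mathbf{x}}$) such that $A_1^{[[j]]} = P_j D_j P_j^{-1}$ with $D_j$ diagonal and purely imaginary. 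Since $\|\mathrm{e}^{\frac{1}{\varepsilon}D_j t}\|=1$ for all $\varepsilon, t$, I obtain
\begin{equation*}
  \bigl\|\mathrm{e}^{\frac{1}{\varepsilon}A_1^{[[j]]}t}\bigr\| \le \|P_j\|\,\|P_j^{-1}\| =: C_j,
\end{equation*}
and hence $\bigl\|\mathrm{e}^{\frac{1}{\varepsilon}A_1^{[k]}(\mathbf{0})t}\bigr\| \le \max_j C_j =: C_0$, uniformly in $\varepsilon, t$.

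Finally, I would bound the similarity transformation. Lemma \ref{LemAA1}(\romannumeral2) states that the entries of both $S^{[k]}(\hat{\mathbf{x}})$ and $\bigl(S^{[k]}(\hat{\mathbf{x}})\bigr)^{-1}$ are polynomials in the components of $\hat{\mathbf{x}}$. Because $\hat{\mathbf{x}}$ lies in a bounded, $\varepsilon$-independent domain $\Omega$, each entry is bounded by a constant depending only on $\Omega$ and on the (fixed) polynomial degrees, and so
\begin{equation*}
  \sup_{\hat{\mathbf{x}}\in\Omega}\bigl\|S^{[k]}(\hat{\mathbf{x}})\bigr\| \le C_S, \qquad
  \sup_{\hat{\mathbf{x}}\in\Omega}\bigl\|(S^{[k]}(\hat{\mathbf{x}}))^{-1}\bigr\| \le C_S'.
\end{equation*}
Combining the three bounds gives $\bigl\|\mathrm{e}^{\frac{1}{\varepsilon}A_1^{[k]}(\hat{\mathbf{x}})t}\bigr\| \le C_S\,C_0\,C_S' =: C$, uniform in $\varepsilon, t$ and $\hat{\mathbf{x}}\in\Omega$. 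I do not expect any serious obstacle here: the only subtlety is ensuring that the diagonalizing matrices $P_j$ for $A_1^{[[j]]}$ can be chosen independently of $\varepsilon, t, \hat{\mathbf{x}}$, which is immediate since $A_1^{[k]}(\mathbf{0})$ itself is a fixed matrix (depending only on $A$ and $k$) once the ordering scheme (\ref{LLEV03}) is fixed.
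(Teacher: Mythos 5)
Your proposal is correct and takes essentially the approach the paper intends: the paper states that Lemma~\ref{LemAA2} follows ``directly'' from Lemma~\ref{Lem3-5} without spelling out the uniformity in $\hat{\mathbf{x}}$, and you fill in exactly the missing details by combining the polynomial similarity transform $S^{[k]}(\hat{\mathbf{x}})$ from Lemma~\ref{LemAA1}(\romannumeral2) with the diagonalizability (with purely imaginary spectrum) of the fixed block-diagonal matrix $A_1^{[k]}(\mathbf{0})$. One minor point worth being explicit about: diagonalizability of the block-diagonal $A_1^{[k]}(\mathbf{0})$ does imply diagonalizability of each block $A_1^{[[j]]}$ (the minimal polynomial of each block divides that of the whole, so it also has distinct roots), so your claim that each $P_j$ exists is justified; otherwise the argument is complete.
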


\subsection{Invariant subspaces and block lower triangular diagonalization}\label{SSec3-3}
This subsection studies the algebraic properties of diagonalizing the matrix $A_1^{[k]}(\hat{\mathbf{x}})$ via a block lower triangular transformation (see Proposition~\ref{LemB3} below). These properties underlie the adiabatic transformations used in the proof of Theorem~\ref{Thm3-10}.

For multi-indices $\beta\in\mathcal{I}_{d+1}^{[[j^\prime]]}$ and $\gamma\in\mathcal{I}_{d+1}^{[[j]]}$, we define their concatenation as $$\beta\oplus\gamma:=(\beta_1,\cdots,\beta_{j^\prime}, \gamma_1,\cdots,\gamma_j)\in\mathcal{I}_{d+1}^{[[j^\prime+j]]}.$$
Consider multi-indices of the form $\alpha=\beta\oplus\{d+1\}^{\oplus j}\in\mathcal{I}_{d+1}^{[k]}$ such that $\beta\in\mathcal{I}_d^{[[j^\prime]]}$. Substituting the form of $A_1$ into (\ref{A3}) gives the action of $D_1$
\begin{equation}\label{B2}
	\begin{split}
  D_1((\mathbf{x}-\hat{\mathbf{x}})^\alpha)=&  \sum_{l=1}^{j^\prime}\sum_{m=1}^{d}A_{\beta_lm}(\mathbf{u}-\hat{\mathbf{u}})^{\chi(\beta;l)}(u_m-\hat{u}_m)(t-\hat{t})^j \\
  & +\sum_{l=1}^{j^\prime}\sum_{m=1}^{d}A_{\beta_lm}\hat{u}_m(\mathbf{u}-\hat{\mathbf{u}})^{\chi(\beta;l)}(t-\hat{t})^j.
  \end{split}
\end{equation}
A lemma on $D_1$-invariant subspaces follows directly from (\ref{B2}).
\begin{Lemma}\label{LemB1}
Define the subspace $$V_{j^\prime,j}:=\operatorname{span}\{(\mathbf{x}-\hat{\mathbf{x}})^\alpha:\alpha=\beta\oplus\{d+1\}^{\oplus j},\beta\in\mathcal{I}_d^{[[j^\prime]]}\}.$$

(\romannumeral1) For any integer $m = 0,\dots, k-j$, the direct sum
$\bigoplus\limits_{j^\prime=0}^m V_{j^\prime,j}$
is a $D_1$-invariant subspace.

(\romannumeral2) When $\hat{\mathbf{x}} = \mathbf{0}_{(d+1)\times 1}$, all subspaces $V_{j',j}$ with $0 \leq j' + j \leq k$ are $D_1$-invariant.
\end{Lemma}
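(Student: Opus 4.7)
The plan is to read the invariance properties directly off equation (\ref{B2}), which is the tool cited by the remark preceding the lemma. The key structural observation is that the right-hand side of (\ref{B2}) splits into two double sums, and each piece lands in a specific $V_{j'',j}$ subspace; tracking this assignment precisely yields both parts.

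For part (i), I would take an arbitrary basis element $(\mathbf{u}-\hat{\mathbf{u}})^\beta(t-\hat{t})^j \in V_{j',j}$ with $\beta \in \mathcal{I}_d^{[[j']]}$, apply $D_1$ via (\ref{B2}), and classify each resulting term. The first double sum produces monomials $(\mathbf{u}-\hat{\mathbf{u}})^{\chi(\beta;l)}(u_m-\hat{u}_m)(t-\hat{t})^j$; since $\chi(\beta;l) \in \mathcal{I}_d^{[[j'-1]]}$ and $m \in \{1,\ldots,d\}$, the concatenation gives a multi-index in $\mathcal{I}_d^{[[j']]}$, so these monomials lie in $V_{j',j}$. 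The second double sum produces $\hat{u}_m(\mathbf{u}-\hat{\mathbf{u}})^{\chi(\beta;l)}(t-\hat{t})^j$, which (since $\chi(\beta;l) \in \mathcal{I}_d^{[[j'-1]]}$) lies in $V_{j'-1,j}$. Hence $D_1(V_{j',j}) \subseteq V_{j',j} \oplus V_{j'-1,j}$, with the convention $V_{-1,j} = \{0\}$ so that the case $j'=0$ (where both double sums in (\ref{B2}) are empty) is handled automatically. Summing $j'$ from $0$ to any $m \leq k-j$, the drop from $j'$ to $j'-1$ never escapes the range, so $D_1\bigl(\bigoplus_{j'=0}^m V_{j',j}\bigr) \subseteq \bigoplus_{j'=0}^m V_{j',j}$.

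Part (ii) follows immediately: taking $\hat{\mathbf{x}} = \mathbf{0}_{(d+1)\times 1}$ makes $\hat{u}_m = 0$ for every $m$, so the second double sum in (\ref{B2}) vanishes identically. The image of $D_1$ is then contained entirely in $V_{j',j}$, and each individual $V_{j',j}$ with $0 \leq j'+j \leq k$ is $D_1$-invariant.

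There is no real analytic obstacle here, as the argument amounts to a careful bookkeeping on multi-indices. The one point that deserves explicit attention is that the inner summations in (\ref{B2}) run only up to $d$ rather than $d+1$, which is precisely what keeps the newly inserted index $m$ inside $\{1,\ldots,d\}$ and thereby preserves the $V_{j',j}$ family; this reflects the block structure of $A_1$ in (\ref{P02}), where the $(d+1)$-th row and column vanish, and is what makes the invariance statement nontrivial at the notational level.
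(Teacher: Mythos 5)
Your proof is correct and takes exactly the approach the paper intends: the paper merely asserts that the lemma ``follows directly from (\ref{B2})'' without spelling out the details, and your explicit multi-index bookkeeping — first sum lands in $V_{j',j}$, second sum drops to $V_{j'-1,j}$, the second sum vanishes when $\hat{\mathbf{x}}=\mathbf{0}$ — is precisely the intended verification. Your closing remark that the inner sums stop at $d$ because of the zero $(d+1)$-th row and column of $A_1$ is a correct and useful observation about why the $V_{j',j}$ family is preserved.
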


Among the invariant subspaces described in part (\romannumeral1) of Lemma \ref{LemB1}, we focus on the subspace $$W_k=\bigoplus\limits_{j^\prime=0}^{1}\bigoplus\limits_ {j=0}^{k-j^\prime}V_{j^\prime,j},$$ which is spanned by the basis $P_{\mathfrak{u}}^{[k,\hat{\mathbf{x}}]}:= \bigcup\limits_{j^\prime=0}^{1}\bigcup\limits_{j=0}^{k-j^\prime}\{(\mathbf{u}-\hat{\mathbf{u}})^{j^\prime}(t-\hat{t})^j\}$. By imposing an appropriate ordering on $P_{\mathfrak{u}}^{[k,\hat{\mathbf{x}}]}$, we obtain an $(k(d+1)+1)$-dimensional vector as
\begin{equation}\label{B3}
\begin{split}
  \mathfrak{u}^{[k,\hat{\mathbf{x}}]}:=&\left[1,\left(\mathbf{u}-\hat{\mathbf{u}}\right)^\top,t-\hat{t},\left(\mathbf{u}-\hat{\mathbf{u}}\right)^\top(t-\hat{t}), \cdots\right. \\
  &\quad\left.\cdots(t-\hat{t})^{k-1},\left(\mathbf{u}-\hat{\mathbf{u}}\right)^\top(t-\hat{t})^{k-1},(t-\hat{t})^k\right]^\top.
\end{split}
\end{equation}

\begin{Lemma}\label{LemB2}
Order the basis elements of $P_{\mathfrak{u}}^{[k,\hat{\mathbf{x}}]}$ as in (\ref{B3}). Then the matrix representation $\bar{\mathcal{A}}_1^{[k]}(\hat{\mathbf{x}})$ of the restricted operator $\left.D_1\right|_{W_k}$ in this basis admits the block-diagonal form
\begin{equation}\label{B4}
  \bar{\mathcal{A}}_1^{[k]}(\hat{\mathbf{x}})=\operatorname{diag}\left\{I_k\otimes\bar{\mathcal{A}}_1(\hat{\mathbf{x}}),0\right\}~\text{with}~\bar{\mathcal{A}}_1(\hat{\mathbf{x}})=
  \left[ \begin{array}{cc} 0 & \mathbf{0}_{1\times d} \\ A\hat{\mathbf{u}} & A \end{array} \right].
\end{equation}
Moreover, if $Q$ diagonalizes $A$ so that $Q^{-1}AQ=\Lambda$, then
\begin{equation}\label{B5}
  Q_{\mathcal{A}}^{[k]}(\hat{\mathbf{x}})=\operatorname{diag}\left\{I_k\otimes Q_{\mathcal{A}}(\hat{\mathbf{x}}),1\right\}~\text{with}~Q_{\mathcal{A}}(\hat{\mathbf{x}})=
  \left[ \begin{array}{cc} 1 &  \\ -\hat{\mathbf{u}} & Q \end{array} \right]
\end{equation}
diagonalizes $\bar{\mathcal{A}}_1^{[k]}(\hat{\mathbf{x}})$ into the canonical form $\operatorname{diag}\{I_k\otimes\operatorname{diag}\{0,\Lambda\},0\}$.
\end{Lemma}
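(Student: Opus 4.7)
The plan is to exploit the explicit formula (\ref{B2}) for the action of $D_1$ to read off the matrix $\mathcal{A}_1^{[k]}(\hat{\mathbf{x}})$ directly in the ordered basis (\ref{B3}), and then to verify the diagonalization by a block matrix computation using the hypothesis $Q^{-1}AQ=\Lambda$.

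For the block structure (\ref{B4}), I would classify the basis elements of $P_{\mathfrak{u}}^{[k,\hat{\mathbf{x}}]}$ according to the value of $j^\prime$ in the decomposition $\alpha=\beta\oplus\{d+1\}^{\oplus j}$. When $j^\prime=0$, the basis element is $(t-\hat{t})^j$ and both sums in (\ref{B2}) are empty, so $D_1((t-\hat{t})^j)=0$. When $j^\prime=1$ with $\beta=(m)$, the truncated multi-index $\chi(\beta;1)$ is empty and (\ref{B2}) collapses to
\begin{equation*}
D_1\bigl((u_m-\hat{u}_m)(t-\hat{t})^j\bigr) = (A\hat{\mathbf{u}})_m(t-\hat{t})^j + \sum_{n=1}^{d} A_{mn}(u_n-\hat{u}_n)(t-\hat{t})^j,
\end{equation*}
which remains in the $(d+1)$-dimensional slice spanned by $\{(t-\hat{t})^j,(u_1-\hat{u}_1)(t-\hat{t})^j,\ldots,(u_d-\hat{u}_d)(t-\hat{t})^j\}$. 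Hence each such slice (for $j=0,\ldots,k-1$) is $D_1$-invariant, and the isolated element $(t-\hat{t})^k$ is annihilated.

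Under the row-by-row convention already used in (\ref{A2}) and (\ref{A5}), where the $i$-th row of $A_1^{[k]}(\hat{\mathbf{x}})$ lists the coefficients of $D_1$ applied to the $i$-th entry of the basis vector, the displayed identity shows that on each invariant slice the restricted matrix has vanishing first row, first-column lower entries $A\hat{\mathbf{u}}$, and lower-right $d\times d$ block equal to $A$; this is exactly $\mathcal{A}_1(\hat{\mathbf{x}})$. Stacking these $k$ identical blocks according to the ordering (\ref{B3}) and appending the trailing $1\times 1$ zero block from $(t-\hat{t})^k$ produces $\operatorname{diag}\{I_k\otimes\mathcal{A}_1(\hat{\mathbf{x}}),0\}$, which proves (\ref{B4}).

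For the diagonalization, the compatible block structure of $Q_{\mathcal{A}}^{[k]}(\hat{\mathbf{x}})=\operatorname{diag}\{I_k\otimes Q_{\mathcal{A}}(\hat{\mathbf{x}}),1\}$ reduces the task to the single $(d+1)\times(d+1)$ identity $Q_{\mathcal{A}}(\hat{\mathbf{x}})^{-1}\mathcal{A}_1(\hat{\mathbf{x}})Q_{\mathcal{A}}(\hat{\mathbf{x}})=\operatorname{diag}\{0,\Lambda\}$, the trailing $1\times 1$ block being trivially conjugated by $1$. Applying the block-triangular inverse formula yields
\begin{equation*}
Q_{\mathcal{A}}(\hat{\mathbf{x}})^{-1}=\begin{bmatrix} 1 & \mathbf{0}_{1\times d}\\ Q^{-1}\hat{\mathbf{u}} & Q^{-1} \end{bmatrix},
\end{equation*}
and a direct multiplication then shows that the $(2,1)$-block of $\mathcal{A}_1(\hat{\mathbf{x}})Q_{\mathcal{A}}(\hat{\mathbf{x}})$ vanishes by the crucial cancellation $A\hat{\mathbf{u}}-A\hat{\mathbf{u}}=0$, while its $(2,2)$-block becomes $AQ$; left-multiplying by $Q_{\mathcal{A}}(\hat{\mathbf{x}})^{-1}$ and invoking $Q^{-1}AQ=\Lambda$ delivers $\operatorname{diag}\{0,\Lambda\}$. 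I expect no substantive obstacle beyond the bookkeeping required to keep the slice ordering in (\ref{B3}) consistent with the row/column convention of $A_1^{[k]}(\hat{\mathbf{x}})$; once the matrix representation is fixed by this convention, both assertions reduce to elementary block computations.
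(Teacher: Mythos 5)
Your proof is correct and follows essentially the same route as the paper: read off the block structure from the action formula (\ref{B2}) slice by slice (the $j$-th slice $\{(t-\hat t)^j,(u_1-\hat u_1)(t-\hat t)^j,\ldots,(u_d-\hat u_d)(t-\hat t)^j\}$ being $D_1$-invariant and carrying the matrix $\mathcal{A}_1(\hat{\mathbf{x}})$, with the isolated $(t-\hat t)^k$ annihilated), then verify the conjugation blockwise. The only difference is cosmetic: you spell out the explicit inverse $Q_{\mathcal{A}}(\hat{\mathbf{x}})^{-1}$ and the cancellation $A\hat{\mathbf{u}}-A\hat{\mathbf{u}}=0$ where the paper simply defers to ``direct computation.''
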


\begin{proof}
Consider multi-indices of the form $\alpha=l\oplus\{d+1\}^{\oplus j}(l\in\mathcal{I}_d^{[[1]]},j=0,\cdots,k-1)$, corresponding to basis elements $(\mathbf{x}-\hat{\mathbf{x}})^\alpha=(u_l-\hat{u}_l)(t-\hat{t})^j\in V_{1,j}$. Then, (\ref{B2}) simplifies to
\begin{equation*}
  D_1((\mathbf{x}-\hat{\mathbf{x}})^\alpha)= \sum_{m=1}^{d}A_{lm}(u_m-\hat{u}_m)(t-\hat{t})^j + \sum_{m=1}^{d}A_{lm}\hat{u}_m(t-\hat{t})^j.
\end{equation*}
When $\alpha=\{d+1\}^{\oplus j}$, we obtain the null action $D_1((\mathbf{x}-\hat{\mathbf{x}})^\alpha)=0$. These computations collectively yield the compact operator representation
\begin{equation*}
\begin{aligned}
  D_1\left(\left[ \begin{array}{c} (t-\hat{t})^j \\ (\mathbf{u}-\hat{\mathbf{u}})(t-\hat{t})^j \end{array} \right]\right)
  &= \bar{\mathcal{A}}_1(\hat{\mathbf{x}})
    \left[ \begin{array}{c} (t-\hat{t})^j \\ (\mathbf{u}-\hat{\mathbf{u}})(t-\hat{t})^j \end{array} \right],\quad j=0,\cdots,k-1,\\
  D_1((t-\hat{t})^k) &=0,
\end{aligned}
\end{equation*}
where $\bar{\mathcal{A}}_1(\hat{\mathbf{x}})$ is defined in (\ref{B4}). Consequently, the matrix representation decomposes as~(\ref{B4}) under the ordered basis $P_{\mathfrak{u}}^{[k,\hat{\mathbf{x}}]}$ specified in (\ref{B3}). The diagonalization property follows from a direct computation and the tensor product formulation.
\end{proof}

We define the set of $j$-th degree polynomials $P_{\mathfrak{v}}^{[[j,\hat{\mathbf{x}}]]}=P_{\mathbf{x}}^{[[j,\hat{\mathbf{x}}]]}\setminus P_{\mathfrak{u}}^{[k,\hat{\mathbf{x}}]}(j=0,\cdots,k)$ via the relative difference of the polynomial sets. It is readily verified that $P_{\mathfrak{v}}^{[[0,\hat{\mathbf{x}}]]}=P_{\mathfrak{v}}^{[[1,\hat{\mathbf{x}}]]}=\emptyset$. For $ j=2,\cdots,k$, the polynomials in $P_{\mathfrak{v}}^{[[j,\hat{\mathbf{x}}]]}$ are arranged in any prescribed order to form the vector $\mathfrak{v}^{[[j,\hat{\mathbf{x}}]]}$. Then we define the local linear extension variable under a new ordering as
\begin{equation}\label{B6}
  \mathbf{x}^{[k,\hat{\mathbf{x}}]}=\left[ \begin{array}{cc} \left(\mathfrak{u}^{[k,\hat{\mathbf{x}}]}\right)^\top & \left(\mathfrak{v}^{[k,\hat{\mathbf{x}}]}\right)^\top \end{array} \right]^\top,~\text{with}~
  \mathfrak{v}^{[k,\hat{\mathbf{x}}]}=\left[ \begin{array}{ccc} \left(\mathfrak{v}^{[[2,\hat{\mathbf{x}}]]}\right)^\top & \cdots & \left(\mathfrak{v}^{[[k,\hat{\mathbf{x}}]]}\right)^\top  \end{array} \right]^\top.
\end{equation}
While this ordering differs from (\ref{LLEV03}) used for numerical constructions, their essential equivalence is claimed in Remark \ref{Rmk0}, allowing our employment of the same notation $\mathbf{x}^{[k,\hat{\mathbf{x}}]}$. We continue to denote by $A_1^{[k]}(\hat{\mathbf{x}})$ the matrix representation of $D_1$ under this ordered basis.

\begin{Proposition}\label{LemB3}
Assume the ordering scheme for $\mathbf{x}^{[k,\hat{\mathbf{x}}]}$ is defined by (\ref{B3}) and (\ref{B6}). Then the matrix $A_1^{[k]}(\hat{\mathbf{x}})$ has a block lower triangular form:
\begin{equation*}
  A_1^{[k]}(\hat{\mathbf{x}})=\left[\begin{array}{cc}
    \bar{\mathcal{A}}_1^{[k]}(\hat{\mathbf{x}}) & \\
    * & *
  \end{array}\right].
\end{equation*}
Furthermore, $A_1^{[k]}(\hat{\mathbf{x}})$ can be diagonalized via the similarity transformation
$$\Lambda_1^{[k]}=(Q^{[k]}(\hat{\mathbf{x}}))^{-1}A_1^{[k]}(\hat{\mathbf{x}})Q^{[k]}(\hat{\mathbf{x}}),$$
with the transformation matrix $Q^{[k]}(\hat{\mathbf{x}})$ satisfying the following property: Both $Q^{[k]}(\hat{\mathbf{x}})$ and $(Q^{[k]}( \hat{\mathbf{x}}))^{-1}$ are block lower triangular, with structures given respectively by
\begin{equation}\label{B7}
  Q^{[k]}(\hat{\mathbf{x}})=\left[\begin{array}{cc}
    Q_{\mathcal{A}}^{[k]}(\hat{\mathbf{x}}) &  \\
    Q_{\mathcal{B}}^{[k]}(\hat{\mathbf{x}}) & Q_{\mathcal{C}}^{[k]}(\hat{\mathbf{x}})
  \end{array}\right], \, (Q^{[k]}(\hat{\mathbf{x}}))^{-1}=\left[\begin{array}{cc}
    \left(Q_{\mathcal{A}}^{[k]}(\hat{\mathbf{x}})\right)^{-1} &  \\
    \bar{Q}_{\mathcal{B}}^{[k]}(\hat{{\mathbf{x}}}) & \left(Q_{\mathcal{C}}^{[k]}(\hat{\mathbf{x}})\right)^{-1}
  \end{array}\right],
\end{equation}
where $Q_ {\mathcal{B}}^{[k]}(\hat{\mathbf{x}}),\bar{Q}_{\mathcal{B}}^{[k]}(\hat{\mathbf{x}}) \in(\mathbb{P}^{k}[\hat{\mathbf{x}}]/\mathbb{P}^{0}[\hat{\mathbf{x}}])^{(D^{[k]}-k(d+1)-1)\times(k(d+1)+1)}$, $Q_{\mathcal{A}}^{[k]}(\hat{\mathbf{x}})$ is defined as in (\ref{B5}), and $Q_{\mathcal{C}}^{[k]}(\hat{\mathbf{x}})$ is a matrix of size $(D^{[k]}-k(d+1)-1)\times(D^{[k]}-k(d+1)-1)$.
\end{Proposition}

\begin{proof}
The structure of $A_1^{[k]}(\hat{\mathbf{x}})$ follows from the matrix representation analysis of $D_1$-invariant subspaces in Lemma \ref{LemB2}, together with the variable ordering specified in (\ref{B6}).

We now derive the block structure (\ref{B7}). By conclusion (\romannumeral2) of Lemma \ref{LemB1}, the subspaces $\operatorname{span}\{P_{\mathfrak{v}}^{[[j,\mathbf{0}]]}\}(j=2,\cdots,k)$ are also $D_1$-invariant when $\hat{\mathbf{x}}=\mathbf{0}_{(d+1)\times 1}$. As a result, under the basis $P_{\mathfrak{u}}^{[k,\hat{\mathbf{x}}]}\cup P_{\mathfrak{v}}^{[k,\mathbf{0}]}$, where $P_{\mathfrak{v}}^{[k,\mathbf{0}]}=\bigcup\limits_{j=2}^k P_{\mathfrak{v}}^{[[j,\mathbf{\mathbf{0}}]]}$, the operator $D_1$ admits a block diagonal matrix representation $\operatorname{diag}\{\bar{\mathcal{A}}_1^{[k]}(\hat{\mathbf{x}}),*\}$. This representation is clearly diagonalizable by a block diagonal matrix $Q_1^{[k]}(\hat{\mathbf{x}})=\operatorname{diag}\{Q_{\mathcal{A}}^{[k]}(\hat{\mathbf{x}}),*\}$ according to Lemma \ref{LemB2}.

It remains to examine the transition matrix from the basis $P_{\mathfrak{u}}^{[k,\hat{\mathbf{x}}]}\cup P_{\mathfrak{v}}^{[k,\mathbf{0}]}$ to $P_{\mathbf{x}}^{[k,\hat{\mathbf{x}}]}$. Since the polynomials in $\mathfrak{v}^{[k,\hat{\mathbf{x}}]}$ are arranged in increasing order by degree according to (\ref{B6}), the same analytical approach as in Lemma \ref{LemAA1} shows that {this transition matrix is lower triangular with the form
\begin{equation*}
  Q_2^{[k]}(\hat{\mathbf{x}})=\left[\begin{array}{cc}
   I_{k(d+1)+1} &  \\
   * & *
   \end{array}\right].
\end{equation*}
Each entry of the lower-left subblock is a polynomial in $\hat{\mathbf{x}}$ whose constant term vanishes. Setting $Q^{[k]}(\hat{\mathbf{x}})=Q_2^{[k]}(\hat{\mathbf{x}})Q_1^{[k]}(\hat{\mathbf{x}})$ then yields the similarity transformation matrix that fulfills all the required properties.}
\end{proof}

The following result follows from direct matrix computations.
\begin{Lemma}\label{LemB4}
Under the ordering (\ref{B3}) and (\ref{B6}), the following identities hold:
\begin{equation}\label{B9}
  \Pi_{j(d+1)+1}^{(j+1)(d+1)}\big(Q^{[k]}(\hat{\mathbf{x}})\big)^{-1} =\left(Q_\mathcal{A}(\hat{\mathbf{x}})\right)^{-1}\Pi_{j(d+1)+1}^{(j+1)(d+1)},\quad j=0,\cdots k-1.
\end{equation}
In particular, we have
\begin{align}
  &\Pi_{j(d+1)+2}^{(j+1)(d+1)}\big(Q^{[k]}(\hat{\mathbf{x}})\big)^{-1}\mathbf{x}^{[k,\hat{\mathbf{x}}]}=(t-\hat{t})^jQ^{-1}\mathbf{u},\quad j=0,\cdots k-1, \label{B10} \\
  &{\Pi_{j(d+1)+1}^{j(d+1)+1}\big(Q^{[k]}(\hat{\mathbf{x}})\big)^{-1}\mathbf{R}^{[k]}(\mathbf{x}^{[k,\hat{\mathbf{x}}]};\hat{\mathbf{x}})=0,\quad j=0,\cdots k.} \label{B11}
\end{align}
\end{Lemma}

\begin{proof}
We first prove (\ref{B9}). Utilizing the block structures presented in (\ref{B5}) and (\ref{B7}), we compute
\begin{align*}
  &\Pi_{j(d+1)+1}^{(j+1)(d+1)}\left(Q^{[k]}(\hat{\mathbf{x}})\right)^{-1}\\
  &=\left[\begin{array}{ccc}
     \textbf{0}_{(d+1)\times(d+1)j} & I_{d+1} & \textbf{0}_{(d+1)\times(D^{[k]}-(d+1)(j+1))}
  \end{array}\right]
  \left[\begin{array}{cccc}
     \left(Q_\mathcal{A}(\hat{\mathbf{x}})\right)^{-1} &  &  & \\
      & \ddots  &  & \\
     &  &  \left(Q_\mathcal{A}(\hat{\mathbf{x}})\right)^{-1} &  \\
     * & \cdots & * & *
  \end{array}\right]\\
  &=\left[\begin{array}{ccc}
     \textbf{0}_{(d+1)\times(d+1)j} & \left(Q_\mathcal{A}(\hat{\mathbf{x}})\right)^{-1} & \textbf{0}_{(d+1)\times(D^{[k]}-(d+1)(j+1))}
  \end{array}\right]
  =\left(Q_\mathcal{A}(\hat{\mathbf{x}})\right)^{-1}\Pi_{j(d+1)+1}^{(j+1)(d+1)}.
\end{align*}
Now, from (\ref{B3}), (\ref{B5}) and~(\ref{B6}), we have
\begin{equation}\label{B12}
(Q_\mathcal{A}(\hat{\mathbf{x}}))^{-1}\Pi_{j(d+1)+1}^{(j+1)(d+1)}\textbf{x}^{[k,\hat{\textbf{x}}]}=
\left[\begin{array}{cc}
      1 &  \\
      \hat{\textbf{u}} & Q^{-1}
  \end{array} \right]
  \left[\begin{array}{c}
    (t-\hat{t})^j  \\
    (\textbf{u}-\hat{\textbf{u}})(t-\hat{t})^j
  \end{array}\right]=\left[\begin{array}{c}
    (t-\hat{t})^j  \\
    (t-\hat{t})^jQ^{-1}\textbf{u}
  \end{array}\right].
\end{equation}
Combined with (\ref{B9}), this directly yields (\ref{B10}).

Next, we note that by the ordering defined in (\ref{B3}), the $(j(d+1)+1)$-th row of $\mathbf{x}^{[k,\hat{\mathbf{x}}]}$ corresponds to the purely time-dependent polynomial $(t-\hat{t})^j$ (for $j=0,\cdots,k$), whose associated multi-index is $\{d+1\}^{\oplus j}$. It then follows from (\ref{LLES02}) that $R^{[k]}_{j(d+1)+1}(\textbf{x}^{[k,\hat{\textbf{x}}]})=0$. Finally, employing (\ref{B9}), identity (\ref{B11}) is verified through a direct computation analogous to that in (\ref{B12}).
\end{proof}

\section{Convergence analysis}\label{Sec4}
For notational simplicity, we denote $\mathbf{x}(t_n)$ by $\mathbf{x}_n$ for $n=0,\cdots,N$. Consider the local linear extension variable at the solution points $\mathbf{x}_n$, together with the associated extension system on the interval $[t_n,t_{n+1}]$:
\begin{equation}\label{LLES-analy}
\begin{aligned}
  &\frac{\mathrm{d}\mathbf{x}^{[k,\mathbf{x}_n]}}{\mathrm{d}t} =\frac{1}{\varepsilon} A_1^{[k]}(\mathbf{x}_n)\mathbf{x}^{[k,\mathbf{x}_n]} +A_0^{[k]}(\mathbf{x}_n)\mathbf{x}^{[k,\mathbf{x}_n]} +\mathbf{R}^{[k]}(\mathbf{x}^{[k,\mathbf{x}_n]}), \quad t_n\leq t\leq t_{n+1},\\
  &\mathbf{x}^{[k,\mathbf{x}_n]}(t_n)=e_1,
\end{aligned}
\end{equation}
where we write $\mathbf{R}^{[k]}(\mathbf{x}^{[k,\mathbf{x}_n]}):=\mathbf{R}^{[k]}(\mathbf{x}^{[k,\mathbf{x}_n]};\mathbf{x}_n)$ for brevity. Taking $\hat{\mathbf{x}}=\mathbf{x}_n$ in Definition \ref{LLEV}, we have $\Pi_2^{d+2}\mathbf{x}^{[k,\mathbf{x}_n]}(t)=\mathbf{x}(t)-\mathbf{x}_n$. {In particular, it holds that $$\mathbf{x}_{n+1}=\Pi_2^{d+2}\mathbf{x}^{[k,\mathbf{x}_n]}(t_{n+1})+\mathbf{x}_n, \quad n=0,\cdots,N-1,$$
which recovers the exact solution of system (\ref{P}) from the high-dimensional variable.}

The truncated version of (\ref{LLES-analy}) is a linear system given by
\begin{equation}\label{LLES-analy-truncation}
\begin{aligned}
  &\frac{\mathrm{d}\tilde{\mathbf{x}}^{[k,\mathbf{x}_n]}}{\mathrm{d}t} =\frac{1}{\varepsilon}A_1^{[k]}(\mathbf{x}_n)\tilde{\mathbf{x}}^{[k,\mathbf{x}_n]} +A_0^{[k]}(\mathbf{x}_n)\tilde{\mathbf{x}}^{[k,\mathbf{x}_n]}, \quad t_n\leq t\leq t_{n+1},\\
  &\tilde{\mathbf{x}}^{[k,\mathbf{x}_n]}(t_n)=e_1.
\end{aligned}
\end{equation}

We similarly denote the projected solution as $\tilde{\mathbf{x}}_{n+1}:=\Pi_2^{d+2}\tilde{\mathbf{x}}^{[k,\mathbf{x}_n]}(t_{n+1})+\mathbf{x}_n$. With the help of the intermediate system (\ref{LLES-analy-truncation}), the error analysis focuses on the difference between the exact solution $\mathbf{x}_{n+1}$ -- obtained from the extension system (\ref{LLES-analy}) -- and the approximation solution $\mathbf{X}_{n+1}$ -- projected from the truncated system (\ref{LLES-num-truncation}).

\subsection{{General results}}\label{SSec4-1}
This subsection establishes the high-order uniform accuracy of the methods {in the general case without assumptions on the scale of $h$ compared to $\varepsilon$.}

\begin{Theorem}\label{Thm3-7}
{Assume that the solution $\mathbf{u}(t)$ of (\ref{P1}) satisfies the bounded oscillatory energy condition (\ref{bdener}), and let numerical solution $\mathbf{U}_n$ $(n=0,\dots,T/h)$ be computed by the scheme (\ref{LLEEI01})-(\ref{LLEEI03}). Then the following error estimate holds:}
\begin{equation*}
\|\mathbf{U}_n-\mathbf{u}(t_n)\| \leq Ch^{k+1},\quad n=0,\dots,T/h.
\end{equation*}
\end{Theorem}

\begin{proof}
In this proof, we assume that the ordering scheme for the extension variables is given by~(\ref{LLEV03}) and divide the procedure into two parts.
	
\textbf{Local estimate.} Applying the variation-of-constants formula to (\ref{LLES-analy}) and (\ref{LLES-analy-truncation}), we have
\begin{equation}\label{Thm3-3-01}
	\begin{split}
  \mathbf{x}^{[k,\mathbf{x}_n]}(t)=&\mathrm{e}^{\frac{1}{\varepsilon}A_1^{[k]}(\mathbf{x}_n)(t-t_n)}e_1 +\int_{t_n}^t\mathrm{e}^{\frac{1}{\varepsilon}A_1^{[k]}(\mathbf{x}_n)(t-s)}A_0^{[k]}(\mathbf{x}_n)\mathbf{x}^{[k,\mathbf{x}_n]}(s)\mathrm{d}s \\ +&\int_{t_n}^t\mathrm{e}^{\frac{1}{\varepsilon}A_1^{[k]}(\mathbf{x}_n)(t-s)}\mathbf{R}^{[k]}(\mathbf{x}^{[k,\mathbf{x}_n]}(s))\mathrm{d}s,
  \end{split}
\end{equation}
and
\begin{equation}\label{Lem3-6-01}
  \tilde{\mathbf{x}}^{[k,\mathbf{x}_n]}(t)=\mathrm{e}^{\frac{1}{\varepsilon}A_1^{[k]}(\mathbf{x}_n)(t-t_n)}e_1 + \int_{t_n}^{t}\mathrm{e}^{\frac{1}{\varepsilon}A_1^{[k]}(\mathbf{x}_n)(t-s)}A_0^{[k]}(\mathbf{x}_n)\tilde{\mathbf{x}}^{[k,\mathbf{x}_n]}(s)\mathrm{d}s,
\end{equation}
respectively. Subtracting (\ref{Lem3-6-01}) from (\ref{Thm3-3-01}) yields
\begin{align}\label{Thm3-3-03}
  \mathcal{J}_1^{[k]}(t):=& \mathbf{x}^{[k,\mathbf{x}_n]}(t)-\tilde{\mathbf{x}}^{[k,\mathbf{x}_n]}(t)\notag\\ =&\int_{t_n}^{t}\mathrm{e}^{\frac{1}{\varepsilon}A_1^{[k]}(\mathbf{x}_n)(t-s)}A_0^{[k]}(\mathbf{x}_n) \left(\mathbf{x}^{[k,\mathbf{x}_n]}(s)-\tilde{\mathbf{x}}^{[k,\mathbf{x}_n]}(s)\right)\mathrm{d}s \\
  &+\int_{t_n}^{t}\mathrm{e}^{\frac{1}{\varepsilon}A_1^{[k]}(\mathbf{x}_n)(t-s)} \mathbf{R}^{[k]}(\mathbf{x}^{[k,\mathbf{x}_n]}(s))\mathrm{d}s.\notag
\end{align}
The bounded oscillatory energy condition (\ref{bdener}) along with (\ref{P1}) gives $\|\dot{\mathbf{u}}(t)\|\leq C$ on $[0,T]$. Then it follows that
\begin{equation*}
  \|\mathbf{x}(s)-\mathbf{x}(t_n)\|\leq \|\mathbf{u}(s)-\mathbf{u}(t_n)\| + (s-t_n) \leq C(s-t_n) \leq Ch,\quad t_n\leq s\leq t_{n+1}.
\end{equation*}
Consequently, the $j$-th order Taylor remainder terms $\mathbf{r}^j$ (see (\ref{ParDiff04})) and their derivatives satisfy
\begin{equation}\label{Thm3-3-04}
  \left\|\mathbf{r}^j\left(\mathbf{x}(s); \mathbf{x}_n\right)\right\|\leq Ch^j, \quad  \Big\|\frac{\partial \mathbf{r}^j\left(\mathbf{x}(s); \mathbf{x}_n\right)}{\partial \mathbf{x}}\Big\|\leq Ch^{j-1},\quad j = 2,\cdots,k+1.
\end{equation}
{Then we derive
\begin{equation}\label{Thm3-3-02}
  \|\mathbf{R}^{[k]}({\mathbf{x}^{[k,\mathbf{x}_n]}}(s))\|\leq Ch^{k+1}
\end{equation}
through (\ref{LLES02}). Substituting (\ref{Thm3-3-02}) into (\ref{Thm3-3-03}) yields}
\begin{equation*}
  \|\mathcal{J}_1^{[k]}(t_{n+1})\| \leq C \int_{t_n}^{t_{n+1}} \|\mathbf{x}^{[k,\mathbf{x}_n]}(s)-\tilde{\mathbf{x}}^{[k,\mathbf{x}_n]}(s)\| \mathrm{d}s + Ch^{k+2}.
\end{equation*}
The local error estimate follows through the integral form of Gronwall inequality
\begin{equation}\label{Thm3-3-05}
  \|\mathcal{J}_1^{[k]}(t_{n+1})\|=\|\mathbf{x}^{[k,\mathbf{x}_n]}(t_{n+1})-\tilde{\mathbf{x}}^{[k,\mathbf{x}_n]}(t_{n+1})\| \leq Ch^{k+2}.
\end{equation}

\textbf{Global estimate.} Applying the nonsingular transformations in Lemma \ref{LemAA1}, we introduce the variable substitutions $\mathbf{X}^{[k,\mathbf{0}]}=\big(S^{[k]}(\mathbf{X}_n)\big)^{-1}\mathbf{X}^{[k,\mathbf{X}_n]}$ and $\tilde{\mathbf{x}} ^{[k,\mathbf{0}]}=\big(S^{[k]}(\mathbf{x}_n)\big)^{-1}\tilde{\mathbf{x}}^{[k,\mathbf{x}_n]}$ into systems (\ref{LLES-num-truncation}) and (\ref{LLES-analy-truncation}), respectively. This yields the transformed systems
\begin{equation*}
	\begin{split}
  &\frac{\mathrm{d}\mathbf{X}^{[k,\mathbf{0}]}}{\mathrm{d}t} =\frac{1}{\varepsilon} A_1^{[k]}(\mathbf{0}) \mathbf{X}^{[k,\mathbf{0}]} +\bar{A}_0^{[k]}(\mathbf{X}_n)\mathbf{X}^{[k,\mathbf{0}]}, \quad
\mathbf{X}^{[k,\mathbf{0}]}(t_n)=\big(S^{[k]}(\mathbf{X}_n)\big)^{-1}e_1,\\
  &\frac{\mathrm{d}\tilde{\mathbf{x}}^{[k,\mathbf{0}]}}{\mathrm{d}t} =\frac{1}{\varepsilon}A_1^{[k]}(\mathbf{0})\tilde{\mathbf{x}}^{[k,\mathbf{0}]} +\bar{A}_0^{[k]}(\mathbf{x}_n)\tilde{\mathbf{x}}^{[k,\mathbf{0}]},\quad
  \tilde{\mathbf{x}}^{[k,\mathbf{0}]}(t_n)=\big(S^{[k]}(\mathbf{x}_n)\big)^{-1}e_1,
  \end{split}
\end{equation*}
where $\bar{A}_0^{[k]}(\cdot)=\big(S^{[k]}(\cdot)\big)^{-1}A_0^{[k]}(\cdot)S^{[k]}(\cdot)$. Using the variation-of-constants formula and subtracting the two expressions gives
\begin{equation}\label{Thm3-3-06}
  \tilde{\mathbf{x}}^{[k,\mathbf{0}]}(t)-\mathbf{X}^{[k,\mathbf{0}]}(t) =\mathrm{e}^{\frac{1}{\varepsilon}A_1^{[k]}(\mathbf{0})(t-t_n)}\Big(\big(S^{[k]}(\mathbf{x}_n)\big)^{-1}-\big(S^{[k]}(\mathbf{X}_n)\big)^{-1}\Big)e_1 +\mathcal{J}_{2}^{[k]}(t),
\end{equation}
with $\mathcal{J}_{2}^{[k]}(t)$ defined as
\begin{align}\label{Thm3-3-07}
  \mathcal{J}_{2}^{[k]}(t):=&\int_{t_n}^{t}\mathrm{e}^{\frac{1}{\varepsilon}A_1^{[k]}(\mathbf{0})(t-s)} \left(\bar{A}_0^{[k]}(\mathbf{x}_n)\tilde{\mathbf{x}}^{[k,\mathbf{0}]}(s) -\bar{A}_0^{[k]}(\mathbf{X}_n)\mathbf{X}^{[k,\mathbf{0}]}(s)\right)\mathrm{d}s \notag\\
  =&\int_{t_n}^t\mathrm{e}^{\frac{1}{\varepsilon}A_1^{[k]}(\mathbf{0})(t-s)} \left(\bar{A}_0^{[k]}(\mathbf{x}_n)-\bar{A}_0^{[k]}(\mathbf{X}_n)\right)\tilde{\mathbf{x}}^{[k,\mathbf{0}]}(s) \mathrm{d}s \\
  &\quad+\int_{t_n}^t\mathrm{e}^{\frac{1}{\varepsilon}A_1^{[k]}(\mathbf{0})(t-s)} \bar{A}_0^{[k]}(\mathbf{X}_n)(\tilde{\mathbf{x}}^{[k,\mathbf{0}]}(s)-\mathbf{X}^{[k,\mathbf{0}]}(s))\mathrm{d}s.\notag
\end{align}
According to Lemma \ref{LemAA1}, we have both $S^{[k]}(\cdot)$ and its inverse are Lipschitz continuous with Lipschitz constants independent of $\varepsilon$. Since $A_0^{[k]}(\cdot)$ is built from the partial derivatives of $\mathbf{F}$, it also enjoys Lipschitz continuity; consequently $\bar{A}_0^{[k]}(\cdot)$ is Lipschitz continuous with a Lipschitz constant independent of $\varepsilon$. Thus, from (\ref{Thm3-3-06}) and (\ref{Thm3-3-07}), it follows that
\begin{equation*}
  \|\tilde{\mathbf{x}}^{[k,\mathbf{0}]}(t)-\mathbf{X}^{[k,\mathbf{0}]}(t)\|\leq C\|\mathbf{x}_n-\mathbf{X}_n\| + C\int_{t_n}^t \|\tilde{\mathbf{x}}^{[k,\mathbf{0}]}(s)-\mathbf{X}^{[k,\mathbf{0}]}(s)\|\mathrm{d}s.
\end{equation*}
Applying the integral form of Gronwall’s inequality yields the estimate
\begin{equation}\label{Thm3-3-08}
  \|\tilde{\mathbf{x}}^{[k,\mathbf{0}]}(t)-\mathbf{X}^{[k,\mathbf{0}]}(t)\|\leq C\mathrm{e}^{Ct}\|\mathbf{x}_n-\mathbf{X}_n\|,\quad t_n\leq t\leq t_{n+1}.
\end{equation}
Exploiting (\ref{Thm3-3-08}) to bound the second term in (\ref{Thm3-3-07}), we obtain the estimate:
\begin{equation}\label{Thm3-3-09}
  \|\mathcal{J}_{2}^{[k]}(t_{n+1})\|\leq Ch\|\mathbf{x}_n-\mathbf{X}_n\|.
\end{equation}
Note that when $\hat{\mathbf{x}}=\mathbf{0}$, Definition \ref{LLEV} implies that the projection $\Pi_2^{d+2}$ applied to the extension variables recovers the original variables. {As a result, we have $\tilde{\mathbf{x}}_{n+1}=\Pi_2^{d+2}\tilde{\mathbf{x}}^{[k,\mathbf{0}]}(t_{n+1})$ and $\mathbf{X}_{n+1}=\Pi_2^{d+2}\mathbf{X}^{[k,\mathbf{0}]}(t_{n+1})$. Projecting equations (\ref{Thm3-3-06}) on both sides and using the matrix structures (\ref{LemAA1-04}), (\ref{LemAA1-01}) and (\ref{LemAA1-02}) gives
\begin{equation*}
  \tilde{\mathbf{x}}_{n+1}-\mathbf{X}_{n+1}=\mathrm{e}^{\frac{1}{\varepsilon}A_1h}(\mathbf{x}_n-\mathbf{X}_n)+\Pi_2^{d+2}\mathcal{J}_{2}^{[k]}(t_{n+1}).
\end{equation*}
We also project (\ref{Thm3-3-03}) by $\Pi_2^{d+2}$ and combine these results, thereby deriving the recurrence}
\begin{equation*}
  \mathbf{x}_{n+1}-\mathbf{X}_{n+1}=\mathrm{e}^{\frac{1}{\varepsilon}A_1h}(\mathbf{x}_n-\mathbf{X}_n) +\Pi_2^{d+2}\big(\mathcal{J}_{1}^{[k]}(t_{n+1})+\mathcal{J}_{2}^{[k]}(t_{n+1})\big).
\end{equation*}
Solving this recurrence with the initial condition $\mathbf{x}_0=\mathbf{X}_0$ yields
\begin{equation}\label{Thm3-3-11}
  \mathbf{x}_n-\mathbf{X}_n=\sum_{j=1}^{n}\mathrm{e}^{\frac{n-j}{\varepsilon}A_1h}\Pi_2^{d+2}\big(\mathcal{J}_{1}^{[k]}(t_j)+\mathcal{J}_{2}^{[k]}(t_j)\big).
\end{equation}
Substituting the bounds (\ref{Thm3-3-05}) and (\ref{Thm3-3-09}) for $\mathcal{J}_{i}^{[k]}(i=1,2)$ into (\ref{Thm3-3-11}) and invoking Lemma~\ref{LemAA2}, we obtain
\begin{equation*}
  \|\mathbf{x}_n-\mathbf{X}_n\|\leq \sum_{j=1}^{n}\big(Ch\|\mathbf{x}_{j-1}-\mathbf{X}_{j-1}\|+Ch^{k+2}\big).
\end{equation*}
Finally, applying the discrete Gronwall inequality gives the desired estimate.
\end{proof}

\subsection{{Improved results for $h>c_0\varepsilon$}}\label{SSec4-2}
We now establish an improved uniform convergence result for the local linear exponential integrators using time steps larger than the scale of $\varepsilon$. In this subsection, we assume that the ordering scheme for extension variables is given by (\ref{B3}) and~(\ref{B6}).

\begin{Theorem}\label{Thm3-10}
Assume the solution $\mathbf{u}(t)$ of (\ref{P1}) satisfies the bounded oscillatory energy condition (\ref{bdener}). Let $\mathbf{U}_n$ be the numerical solution computed by the scheme~ (\ref{LLEEI01})-(\ref{LLEEI03}). {Then for the large time step satisfying $h>c_0\varepsilon$, where $c_0$ is a constant independent of $\varepsilon$,} we have
\begin{equation*}
\|\mathbf{U}_n-\mathbf{u}(t_n)\| \leq C\varepsilon h^k,\quad n=0,\cdots,T/h.
\end{equation*}
\end{Theorem}

We remark that Proposition \ref{Lem3-5} only guarantees zero real parts for eigenvalues of $A_1^{[k]}$, without excluding additional zero eigenvalues beyond those introduced by $\mathbf{x}^{[[0,\hat{\mathbf{x}}]]}$ (see Lemma \ref{LemA4}). This situation arises particularly when system (\ref{P1}) represents the first-order reformulation of the second-order equation (\ref{OP}), where the conjugate eigenvalue pairs of matrix $A$ inherently result in the existence of additional zero eigenvalues. The non-oscillatory modes associated with these zero eigenvalues present additional analytical challenges. To overcome this difficulty, we separate the oscillatory and non-oscillatory components as the primary methodology in our proof.

\begin{proof}
We first apply the adiabatic transformation to the extension variable as follows:
\begin{equation*}
  \mathbf{w}^{[k,\mathbf{x}_n]}(t)=\mathrm{e} ^{-\frac{1}{\varepsilon}\Lambda_1^{[k]}t}\big(Q^{[k]}(\mathbf{x}_n)\big)^{-1}\mathbf{x}^{[k,\mathbf{x}_n]}(t),
\end{equation*}
{where the diagonal matrix $\Lambda_1^{[k]}$ and block-lower-triangular matrix $Q^{[k]}(\mathbf{x}_n)$ is defined in Proposition \ref{LemB3}.} An analogous transformation is applied to the exact solution $\mathbf{u}(t)$ via $\mathbf{w}(t)=\mathrm{e} ^{-\frac{1}{\varepsilon}\Lambda t} Q^{-1}\mathbf{u}(t)$ with $Q$ and $\Lambda$ given in Lemma \ref{LemB2}. Further, using (\ref{B10}), we obtain
\begin{equation}\label{Thm3-10-03}
  \Pi_2^{d+1}\mathbf{w}^{[k,\mathbf{x}_n]}(t)=\mathrm{e}^{-\frac{1}{\varepsilon}\Lambda t}\Pi_2^{d+1}\big(Q^{[k]}(\mathbf{x}_n)\big)^{-1}\mathbf{x}^{[k,\mathbf{x}_n]}(t) =\mathbf{w}(t).
\end{equation}
Thus the relation between the adiabatic variables $\mathbf{w}^{[k,\mathbf{x}_n]}(t)$ and $\mathbf{w}(t)$ preserves the original correspondence between $\mathbf{x}^{[k,\mathbf{x}_n]}(t)$ and $\mathbf{x}(t)$. Similarly, a high-dimensional adiabatic variable is introduced for the truncated system (\ref{LLES-analy-truncation}) by setting
$$\tilde{\mathbf{w}}^{[k,\mathbf{x}_n]}=\mathrm{e} ^{-\frac{1}{\varepsilon}\Lambda_1^{[k]}t}\big(Q^{[k]}(\mathbf{x}_n)\big)^{-1}\tilde{\mathbf{x}}^{[k,\mathbf{x}_n]}.$$
In these adiabatic variables, equation (\ref{Thm3-3-03}) becomes
\begin{align}\label{Thm3-10-04}
&\mathbf{w}^{[k,\mathbf{x}_n]}(t_{n+1})-\tilde{\mathbf{w}}^{[k,\mathbf{x}_n]}(t_{n+1})\notag\\
&= \int_{t_n}^{t_{n+1}} \mathrm{e}^{-\frac{1}{\varepsilon}\Lambda_1^{[k]}s} \big(Q^{[k]}(\mathbf{x}_n)\big)^{-1}A_0^{[k]}(\mathbf{x}_n)Q^{[k]}(\mathbf{x}_n)
\mathrm{e}^{\frac{1}{\varepsilon}\Lambda_1^{[k]}s} \big(\mathbf{w}^{[k,\mathbf{x}_n]}(s)-\tilde{\mathbf{w}}^{[k,\mathbf{x}_n]}(s)\big) \mathrm{d}s\notag\\
&\quad+\int_{t_n}^{t_{n+1}}\mathrm{e}^{-\frac{1}{\varepsilon}\Lambda_1^{[k]}s} \big(Q^{[k]}(\mathbf{x}_n)\big)^{-1}\mathbf{R}^{[k]}\big(Q^{[k]}(\mathbf{x}_n) \mathrm{e}^{\frac{1}{\varepsilon}\Lambda_1^{[k]}s} \mathbf{w}^{[k,\mathbf{x}_n]}(s)\big)\mathrm{d}s=:\mathrm{H}_1+\mathrm{H}_2.
\end{align}

{In what follows, we bound the second integral $\mathrm{H}_2$ in (\ref{Thm3-10-04}) by dividing it into two cases according to the row index.

\textbf{Case 1: Estimate for the first $k(d+1)+1$ rows in $\mathrm{H}_2$}. We perform a row-wise analysis on these rows corresponding to the variables in $P_{\mathfrak{u}}^{[k,\mathbf{x}_n]}$.} We first consider the $(j(d+1)+1)$-th rows for $j=0,\cdots,k$. Lemma \ref{LemB2} implies that the diagonal entries of $\Lambda$ on these rows are all zero. Using (\ref{B11}), we obtain
\begin{equation}\label{Thm3-10-05}
  \Pi_{j(d+1)+1}^{j(d+1)+1}\mathrm{H}_2=0, \, j=0,\cdots,k.
\end{equation}

For the remaining rows, the analysis is then conducted for rows $2$ through $d+1$, corresponding to the variable $\mathbf{u}$. By (\ref{B9}) and (\ref{LLES02}), the restriction of $\mathrm{H}_2$ to these rows takes the form
\begin{equation}\label{Thm3-10-02}
  \Pi_{2}^{d+1}\mathrm{H}_2=\int_{t_n}^{t_{n+1}}\mathrm{e}^{-\frac{1}{\varepsilon}\Lambda s} Q^{-1}\mathbf{r}_n^{k+1}\big(Q\mathrm{e}^{\frac{1}{\varepsilon}\Lambda s}\mathbf{w}(s),s\big) \mathrm{d}s,
\end{equation}
where $\mathbf{r}_n^{k+1}$ denotes the Taylor remainder (\ref{ParDiff04}) evaluated at $(\hat{\mathbf{u}},\hat{t})=(\mathbf{u}(t_n),t_n)$. Consider the $q$-th row $(1\leq q\leq d)$ of (\ref{Thm3-10-02}), written as
\begin{equation}\label{Thm3-10-06}
  \int_{t_n}^{t_{n+1}}\mathrm{e}^{-\frac{\mathrm{i}}{\varepsilon}\lambda_qs}\beta_q(s)\mathrm{d}s \quad \text{with}\quad
  \beta_q(s):=\sum_{p=1}^{d}\big(Q^{-1}\big)_{qp} r_{n,p}^{k+1}\big(Q\mathrm{e}^{\frac{1}{\varepsilon}\Lambda s}\mathbf{w}(s),s\big).
\end{equation}
Since the oscillator $\mathrm{e}^{-\frac{\mathrm{i}}{\varepsilon}\lambda_qs}$ has a period of $\frac{2\pi\varepsilon}{|\lambda_q|}$, we define $N_q=\left\lfloor\frac{h|\lambda_q|}{2\pi\varepsilon}\right\rfloor$ as the number of complete oscillation cycles within a single time step, which satisfies
\begin{equation}\label{Thm3-10-07}
  \frac{2\pi\varepsilon} {|\lambda_q|}N_q\leq t_{n+1}-t_n \leq \frac{2\pi\varepsilon} {|\lambda_q|}(N_q+1).
\end{equation}
Given the condition $h>c_0\varepsilon$, it is enough to assume $N_q\geq 1$. Then the integral (\ref{Thm3-10-06}) decomposes as
\begin{equation}\label{Thm3-10-08}
	\begin{split}
 \bigg(\sum_{r=1}^{N_q}&\Big( \int_{t_n+\frac{2\pi\varepsilon}{|\lambda_q|}r-\frac{2\pi\varepsilon}{|\lambda_q|}}^{t_n+\frac{2\pi\varepsilon}{|\lambda_q|}r-\frac{\pi\varepsilon}{|\lambda_q|}} +\int_{t_n+\frac{2\pi\varepsilon}{|\lambda_q|}r-\frac{\pi\varepsilon}{|\lambda_q|}}^{t_n+\frac{2\pi\varepsilon}{|\lambda_q|}r} \Big)+\int_{t_n+\frac{2\pi\varepsilon}{|\lambda_q|}N_q}^{t_{n+1}}\bigg) \mathrm{e}^{-\frac{\mathrm{i}}{\varepsilon}\lambda_qs}\beta_q(s)\mathrm{d}s\\
  &=\sum_{r=1}^{N_q} \int_{t_n+\frac{2\pi\varepsilon}{|\lambda_q|}r-\frac{2\pi\varepsilon}{|\lambda_q|}}^{t_n+\frac{2\pi\varepsilon}{|\lambda_q|}r-\frac{\pi\varepsilon}{|\lambda_q|}}
  \mathrm{e}^{-\frac{\mathrm{i}}{\varepsilon}\lambda_qs}\Big(\beta_q(s)-\beta_q\big(s+\frac{\pi\varepsilon}{|\lambda_q|}\big)\Big)\mathrm{d}s\\ &\qquad\qquad\qquad+\int_{t_n+\frac{2\pi\varepsilon}{|\lambda_q|}N_q}^{t_{n+1}} \mathrm{e}^{-\frac{\mathrm{i}}{\varepsilon}\lambda_qs}\beta_q(s)\mathrm{d}s.
  \end{split}
\end{equation}
Applying (\ref{Thm3-3-04}) and the definition of $\beta_q(s)$, it follows from (\ref{Thm3-10-07}) that
\begin{equation}\label{Thm3-10-09}
  \bigg\|\int_{t_n+\frac{2\pi\varepsilon}{|\lambda_q|}N_q}^{t_{n+1}} \mathrm{e}^{-\frac{\mathrm{i}}{\varepsilon}\lambda_qs}\beta_q(s)\mathrm{d}s\bigg\| \leq \Big(t_{n+1}-t_n-\frac{2\pi\varepsilon}{|\lambda_q|}N_q\Big) \|\beta_q(s)\| \leq C\varepsilon h^{k+1}.
\end{equation}

From the original system (\ref{P1}), the adiabatic variable $\mathbf{w}(t)$ satisfies
\begin{equation*}
  \frac{\mathrm{d}\mathbf{w}}{\mathrm{d}t} = \mathrm{e}^{-\frac{1}{\varepsilon}\Lambda t}Q^{-1} \mathbf{F}\big(Q\mathrm{e}^{\frac{1}{\varepsilon}\Lambda t}\mathbf{w}(s),s\big).
\end{equation*}
Then we have
\begin{align*}
&\beta_q(s)-\beta_q\Big(s+\frac{\pi\varepsilon}{|\lambda_q|}\Big) \\
&=-\sum_{p=1}^{d}\big(Q^{-1}\big)_{qp}\int_{0}^{\frac{\pi\varepsilon}{|\lambda_q|}}
\frac{\mathrm{d}}{\mathrm{d}\theta}r_{n,p}^{k+1} \big(Q\mathrm{e}^{\frac{1}{\varepsilon}\Lambda (s+\theta)}\mathbf{w}(s+\theta),s+\theta\big)\mathrm{d}\theta\notag\\
&=-\sum_{p=1}^{d}\big(Q^{-1}\big)_{qp} \int_{0}^{\frac{\pi\varepsilon}{|\lambda_q|}}\Big(\frac{\partial r_{n,p}^{k+1}(\mathbf{x}(s+\theta))}{\partial\mathbf{u}} \big(\frac{1}{\varepsilon}Q\Lambda\mathrm{e}^{\frac{1}{\varepsilon}\Lambda(s+\theta)}\mathbf{w}(s+\theta) +\mathbf{F}(\mathbf{x}(s+\theta)) \big) \\
&\qquad\qquad\qquad\qquad\qquad +\frac{\partial r_{n,p}^{k+1}(\mathbf{x}(s+\theta))}{\partial t} \Big)\mathrm{d}\theta.
\end{align*}
The bounded oscillatory energy condition (\ref{bdener}) guarantees that  $\|\mathbf{w}\|\leq C\varepsilon$. Then the estimate (\ref{Thm3-3-04}) implies{
\begin{equation}\label{Thm3-10-10}
\begin{split}
  \Big\|\beta_q(s)-\beta_q\Big(s+\frac{\pi\varepsilon}{|\lambda_q|}\Big)\Big\|
  \leq & C\int_{0}^{\frac{\pi\varepsilon}{|\lambda_q|}} \Big\|\frac{\partial r_n^{k+1}(\mathbf{x}(s+\theta))}{\partial\mathbf{x}} \Big\| \Big(\frac{1}{\varepsilon}\|\mathbf{w}(s+\theta)\| \\
  & +  \|\mathbf{F}(\mathbf{u}(s+\theta))\|+1\Big)\mathrm{d}\theta \leq C\varepsilon h^k.
  \end{split}
\end{equation}}
From (\ref{Thm3-10-07})--(\ref{Thm3-10-10}), we obtain for all $q=1,\cdots,d$
\begin{align*}
  &\left\|\int_{t_n}^{t_{n+1}}\mathrm{e}^{-\frac{\mathrm{i}}{\varepsilon}\lambda_qs}\beta_q(s)\mathrm{d}s\right\| \leq \sum_{r=1}^{N_q} \int_{t_n+\frac{2\pi\varepsilon}{|\lambda_q|}r-\frac{2\pi\varepsilon}{|\lambda_q|}}^{t_n+\frac{2\pi\varepsilon}{|\lambda_q|}r-\frac{\pi\varepsilon}{|\lambda_q|}}
  C\Big\|\beta_q(s)-\beta_q\big(s+\frac{\pi\varepsilon}{|\lambda_q|}\big)\Big\|\mathrm{d}s \notag\\
 &\qquad +\Big\|\int_{t_n+\frac{2\pi\varepsilon}{|\lambda_q|}N_q}^{t_{n+1}} \mathrm{e}^{-\frac{\mathrm{i}}{\varepsilon}\lambda_qs}\beta_q(s)\mathrm{d}s\Big\|\leq CN_q\frac{\pi\varepsilon^2 h^k}{|\lambda_q|} + C\varepsilon h^{k+1}
  \leq C\varepsilon h^{k+1}.
\end{align*}
Hence, we derive that $\mathrm{H}_2$ in (\ref{Thm3-10-04}), restricted to the rows $1$ through $d+1$, satisfies the bound $O(\varepsilon h^{k+1})$.

The same methodology extends naturally to rows $j(d+1)+1$ through $(j+1)(d+1)$ for $j=1,\cdots,k-1$. In fact, according to (\ref{B10}), we obtain $\Pi_{j(d+1)+2}^{(j+1)(d+1)}\mathbf{w}^{[k,\mathbf{x}_n]}(t)=(t-t_n)^j\mathbf{w}(t)$ analogous to (\ref{Thm3-10-03}). From (\ref{LLES02}), the remainder terms for these rows reduce to $(t-t_n)^j\mathbf{r}_n^{k-j+1}$, which is again bounded by $O(h^{k+1})$. These observations guarantee that this error result holds for all the first $(k(d+1)+1)$ rows:
\begin{equation}\label{Thm3-10-11}
  \big\|\Pi_1^{k(d+1)+1}\mathrm{H}_2\big\|\leq C\varepsilon h^{k+1}.
\end{equation}

{\textbf{Case 2: Estimate for the remaining rows in $\mathrm{H}_2$.} Substituting the block structure of $(Q^{[k]}(\mathbf{x}_n))^{-1}$ from (\ref{B7}) yields the relation
\begin{equation}\label{Thm3-10-12}
\begin{split}
  &\Pi_{k(d+1)+2}^{D^{[k]}}\big(Q^{[k]}(\mathbf{x}_n)\big)^{-1}\mathbf{R}^{[k]}\big(Q^{[k]}(\mathbf{x}_n) \mathrm{e}^{\frac{1}{\varepsilon}\Lambda_1^{[k]}s} \mathbf{w}^{[k,\mathbf{x}_n]}(s)\big) \\
  &\qquad=\bar{Q}_{\mathcal{B}}^{[k]}(\mathbf{x}_n)\Pi_1^{k(d+1)+1}\mathbf{R}^{[k]}+ \big(Q_{\mathcal{C}}^{[k]}(\mathbf{x}_n)\big)^{-1}\Pi_{k(d+1)+2}^{D^{[k]}}\mathbf{R}^{[k]}.
  \end{split}
\end{equation}

Since $\bar{Q}_{\mathcal{B}}^{[k]}$ belongs to the space $(\mathbb{P}^{k}[\mathbf{x}_n]/\mathbb{P}^{0}[\mathbf{x}_n])^{(D^{[k]}-k(d+1)-1)\times(k(d+1)+1)}$ (see Lemma~\ref{LemB3}), it follows from (\ref{bdener}) that $\bar{Q} _{\mathcal{B}}^{[k]}(\mathbf{x}_n)=O(\varepsilon)$.  Using the estimate (\ref{Thm3-3-02}), the first term in (\ref{Thm3-10-12}) is shown to be of order $O(\varepsilon h^{k+1})$. The projection operator in the second term corresponds to variables in $P_{\mathfrak{v}}^{[k,\mathbf{x}_n]}$. According to (\ref{B3}) and (\ref{B6}), the multi-indices associated with these polynomial variables contain at least two components distinct from $d+1$. Hence, it follows from (\ref{LLES02}), (\ref{Thm3-3-04}) and  (\ref{bdener}) that $$\|\Pi_{k(d+1)+2}^{D^{[k]}}\mathbf{R} ^{[k]}\|\leq C\varepsilon h^k.$$
Combining the estimate of the terms in (\ref{Thm3-10-12}) yields
\begin{equation*}
  \big\|\Pi_{k(d+1)+2}^{D^{[k]}}\mathrm{H}_2\big\|\leq C\varepsilon h^{k+1}.
\end{equation*}
Together with (\ref{Thm3-10-11}), this result shows that
\begin{equation}\label{Thm3-111}
  \big\|\mathrm{H}_2\big\|\leq C\varepsilon h^{k+1}.
\end{equation}

Finally, using \eqref{Thm3-111}, we obtain the following estimate for the second integral in (\ref{Thm3-3-03})
\begin{equation*}
  \left\|\int_{t_n}^{t_{n+1}}\mathrm{e}^{\frac{1}{\varepsilon}A_1^{[k]}(\mathbf{x}_n)(t_{n+1}-s)} \mathbf{R}^{[k]}(\mathbf{x}^{[k,\mathbf{x}_n]}(s)) \mathrm{d}s\right\| =\left\|Q^{[k]}(\mathbf{x}_n)\mathrm{e}^{\frac{1}{\varepsilon}\Lambda_1^{[k]} t_{n+1}}\mathrm{H}_2 \right\|  \leq C\varepsilon h^{k+1}.
\end{equation*}
The application of Gronwall's inequality to (\ref{Thm3-3-03}) then gives the improved local error bound $$\|\mathbf{x}^{[k,\mathbf{x}_n]}(t_{n+1})-\tilde{\mathbf{x}}^{[k,\mathbf{x}_n]}(t_{n+1})\| \leq C\varepsilon h^{k+1}.$$}

The argument of global estimate proceeds identically to that in Theorem \ref{Thm3-7} and is therefore omitted.
\end{proof}

\begin{Remark}\label{Rmk6}
When using $k$-th order local linear extension variables, if the sum of any $l$ eigenvalues of $A$ satisfy $\lambda_{j_1}+\cdots+\lambda_{j_l}=O(1)$ for any integer $l$ with \(1 \leq l \leq k\), then Lemma~\ref{LemA4} guarantees that \(A_1^{[k]}\) has no zero eigenvalues. In this non-resonance case, the row-by-row analysis, based on periodic decomposition approach developed for the first \(k(d+1)+1\) rows, can be directly extended to all rows.
\end{Remark}

{We now summarize and interpret the theoretical results of the previous two subsections from the perspective of applying the numerical scheme under different step-size regimes. The convergence order $O(\varepsilon h^k)$ established in Theorem \ref{Thm3-10} for large step sizes improves upon the general order $O(h^{k+1})$ given in Theorem \ref{Thm3-7}. Therefore, the former provides the theoretical foundation for computations with large steps, whereas the latter governs the error in small-step regimes. Consequently, for a fixed $\varepsilon$, the local linear extension exponential integrator exhibits a piecewise convergence behavior in $h$. Conversely, for a fixed step size $h$, the numerical error also depends on $\varepsilon$ in a piecewise manner.

We further examine the location of the transition points between the two regimes. The periodic decomposition technique used in (\ref{Thm3-10-08}) requires the time step to exceed the period of every oscillator $\mathrm{e}^{\frac{\mathrm{i}}{\varepsilon}\lambda_jt}$. This requirement is satisfied when $$h>h_0:=\frac{2\pi\varepsilon}{\mu},$$ where $\mu=\min|\lambda(A)|$ denotes the minimal magnitude of the eigenvalues of $A$. Hence, the value $h=h_0$ generally acts as a practical threshold for the numerical solution $\mathbf{u}_n$ to separate the two convergence orders. These observations will be verified numerically in Section \ref{Sec5}.}

\subsection{Application on second-order ODEs}
Through the variable transformation $\mathbf{p}=\varepsilon \dot{\mathbf{y}}$ and denoting $\mathbf{u}=[\mathbf{y}^\top,\mathbf{p}^\top]^\top$, we reformulate the system (\ref{OP}) as a first-order differential equation in the form (\ref{P1}), where
\begin{equation*}
  A=\left[\begin{array}{cc}
              \mathbf{0}_{d\times d} & I_d\\
              -M & \mathbf{0}_{d\times d}\\
            \end{array}\right],\quad
  \mathbf{F}(\mathbf{u},t)=\left[\begin{array}{c}
                                   \mathbf{0}_{d\times 1} \\
                                   \varepsilon\mathbf{g}(\mathbf{y},t)\\
                                 \end{array}\right],\quad
  \mathbf{u}_{\mathrm{in}}=\left[\begin{array}{c}
                       \mathbf{y}_{\mathrm{in}} \\
                       \varepsilon\dot{\mathbf{y}}_{\mathrm{in}} \\
                     \end{array}\right].
\end{equation*}
When $M$ is a symmetric positive definite matrix, the matrix $A$ is diagonalizable {with its spectrum given by $\lambda(A)=\pm\mathrm{i}\sqrt{\lambda(M)}$}, which comprises $d$ conjugate pairs of purely imaginary eigenvalues. Building upon the convergence results established in Theorems \ref{Thm3-7} and \ref{Thm3-10}, we obtain the following high-order uniform convergence result.
\begin{Theorem}\label{Thm4-7}
Let $\mathbf{y}(t)$ and $\dot{\mathbf{y}}(t)$ be solutions of (\ref{OP}) that satisfy (\ref{boec}). Suppose the derivatives of $\mathbf{g}$ are Lipschitz continuous in both $\mathbf{y}$ and $t$ variables up to order $k$ with Lipschitz constants independent of $\varepsilon$. {Then the uniform error estimate holds:}
\begin{equation*}
  \|\mathbf{y}_n-\mathbf{y}(t_n)\| +\varepsilon\|\dot{\mathbf{y}}_n-\dot{\mathbf{y}}(t_n)\|\leq C\varepsilon h^{k+1}.
\end{equation*}
{In particular, for the large time step satisfying $h > c_0\varepsilon$, the following improved error estimate holds:}
\begin{equation*}
  \|\mathbf{y}_n-\mathbf{y}(t_n)\| +\varepsilon\|\dot{\mathbf{y}}_n-\dot{\mathbf{y}}(t_n)\|\leq C\varepsilon^2 h^{k}.
\end{equation*}
\end{Theorem}

\section{Numerical results}\label{Sec5}
This section presents numerical experiments to verify the sharpness of the error bounds established in Section \ref{Sec4}. The local linear extension exponential integrator employing polynomials of order $k$ will be denoted by LLEEI$(k+1)$. Further numerical tests are provided in \cite{QD2025b}.

\textbf{Example 1.} The linear and nonlinear terms in (\ref{OP}) are taken as $M=1$ and $g(y,t)=-(t+\cos(2\sqrt{6}t))\sin y$, respectively. Simulations run up to the final time $T=6$. Initial conditions are set to $y(0)=\varepsilon$ and $\dot{y}(0)=\sqrt{3}$, which satisfy the requirement of bounded oscillatory energy. Since analytical solutions are unavailable for the numerical example, reference solutions are computed using the classical fourth-order Runge–Kutta method with a fine step size $h=0.1/2^{18}$. Errors are measured via $error(\mathbf{y})$ and $error(\dot{\mathbf{y}})$, where $error(\cdot)$ denotes the global maximum error over the time grid $\{t_n\}_{n=0}^N$.

We first consider the weakly stiff case by fixing $\varepsilon=1/2^2$. The top row of Fig.~\ref{fig1-1} shows how the numerical errors depend on the step size $h$. The results confirm that each LLEEI$(k+1)$ method exhibits $(k+1)$-th order convergence, demonstrating that the order can be increased arbitrarily with $k$. The absence of convergence for LLEEI$5$ and LLEEI$6$ is due to their errors having reached machine precision. To examine the highly oscillatory regime, we choose $\varepsilon=1/2^8$; the corresponding results are shown in the bottom row of Fig.~\ref{fig1-1}. {In each panel, the vertical dotted line marks $h=h_0$, where $h_0=2\pi\varepsilon$ is the threshold for this example.} The plots demonstrate that $k$-th order convergence holds for large step sizes $h > h_0$, while $(k+1)$-th order convergence is achieved when $h < h_0$.

\begin{figure}[htbp]
  \centering
  \includegraphics[width=\textwidth]{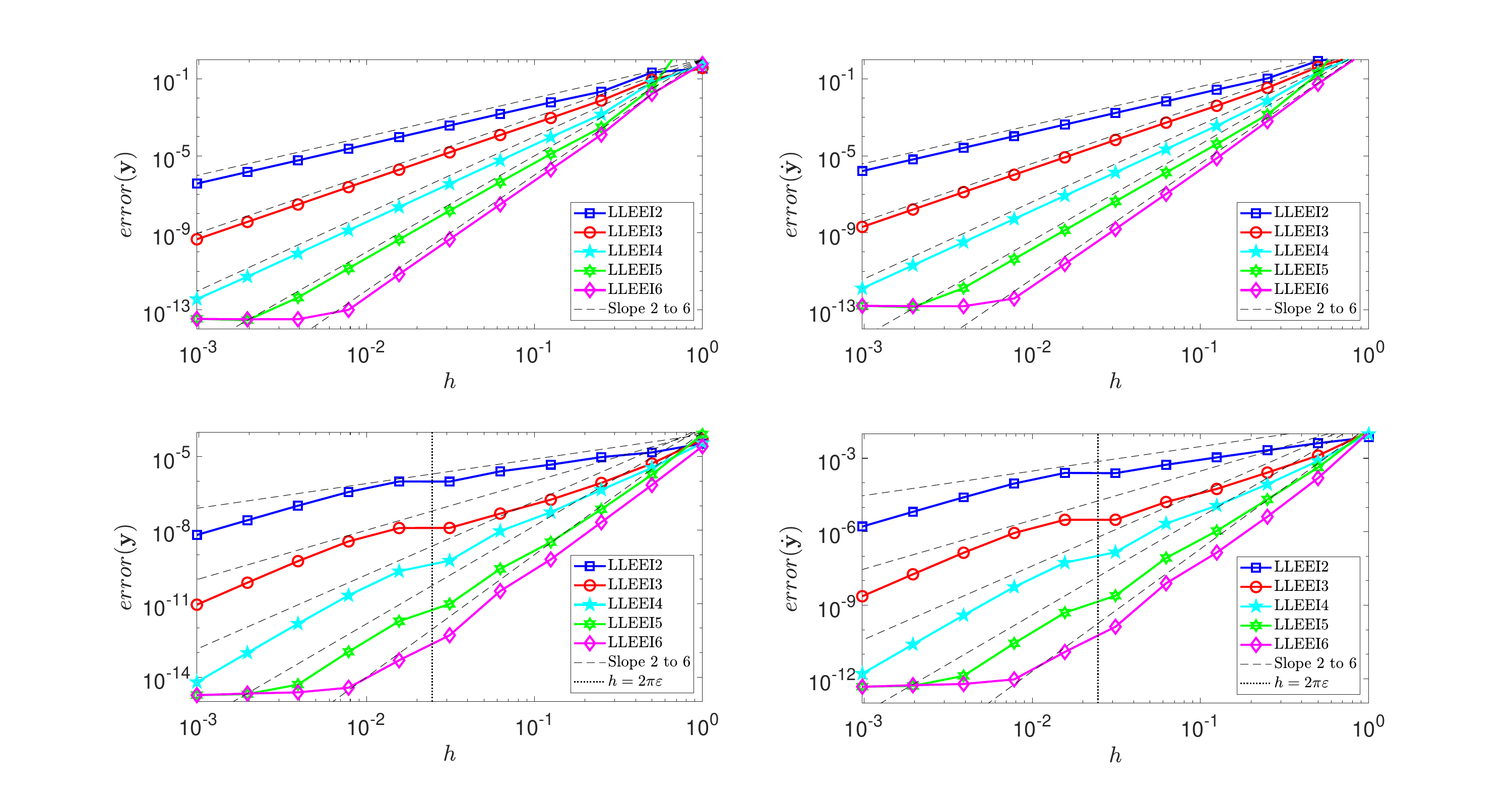}
  \caption{Error versus step size $h$. The top row corresponds to the weakly oscillatory case ($\varepsilon=1/2^2$), while the bottom row represents the highly oscillatory regime ($\varepsilon=1/2^8$). The left and right columns display the numerical errors for $\mathbf{y}$ and $\dot{\mathbf{y}}$, respectively.}\label{fig1-1}
\end{figure}

Next, we examine the uniformity of numerical errors as $\varepsilon$ varies. The top row of Fig.~\ref{fig1-2} presents the errors for different values of $\varepsilon$ with a fixed small step size $h=1/2^6$. {In each panel, the vertical dotted line represents $\varepsilon=\varepsilon_0$. For this specific example, the threshold evaluates to $\varepsilon_0=\frac{h}{2\pi}$, obtained by inverting the critical conditions $h=h_0=\frac{2\pi\varepsilon} {\sqrt{\mu}}$.} When $\varepsilon > \varepsilon_0$ (i.e., $h < h_0$), the LLEEI$(k+1)$ methods yield errors in $\mathbf{y}$ that scale linearly with $\varepsilon$, while the errors in $\dot{\mathbf{y}}$ remain uniform. Finally, we test a large step size $h=1/2$. The corresponding errors for varying $\varepsilon$ are displayed in the bottom row of Fig. \ref{fig1-2}. The results confirm that for $\varepsilon<\varepsilon_0$ (i.e., $h>h_0$), the errors in $\mathbf{y}$ and $\dot{\mathbf{y}}$ exhibit second-order and first-order convergence with respect to $\varepsilon$, respectively, matching the predictions of Theorem \ref{Thm4-7}.

\begin{figure}[htbp]
  \centering
  \includegraphics[width=\textwidth]{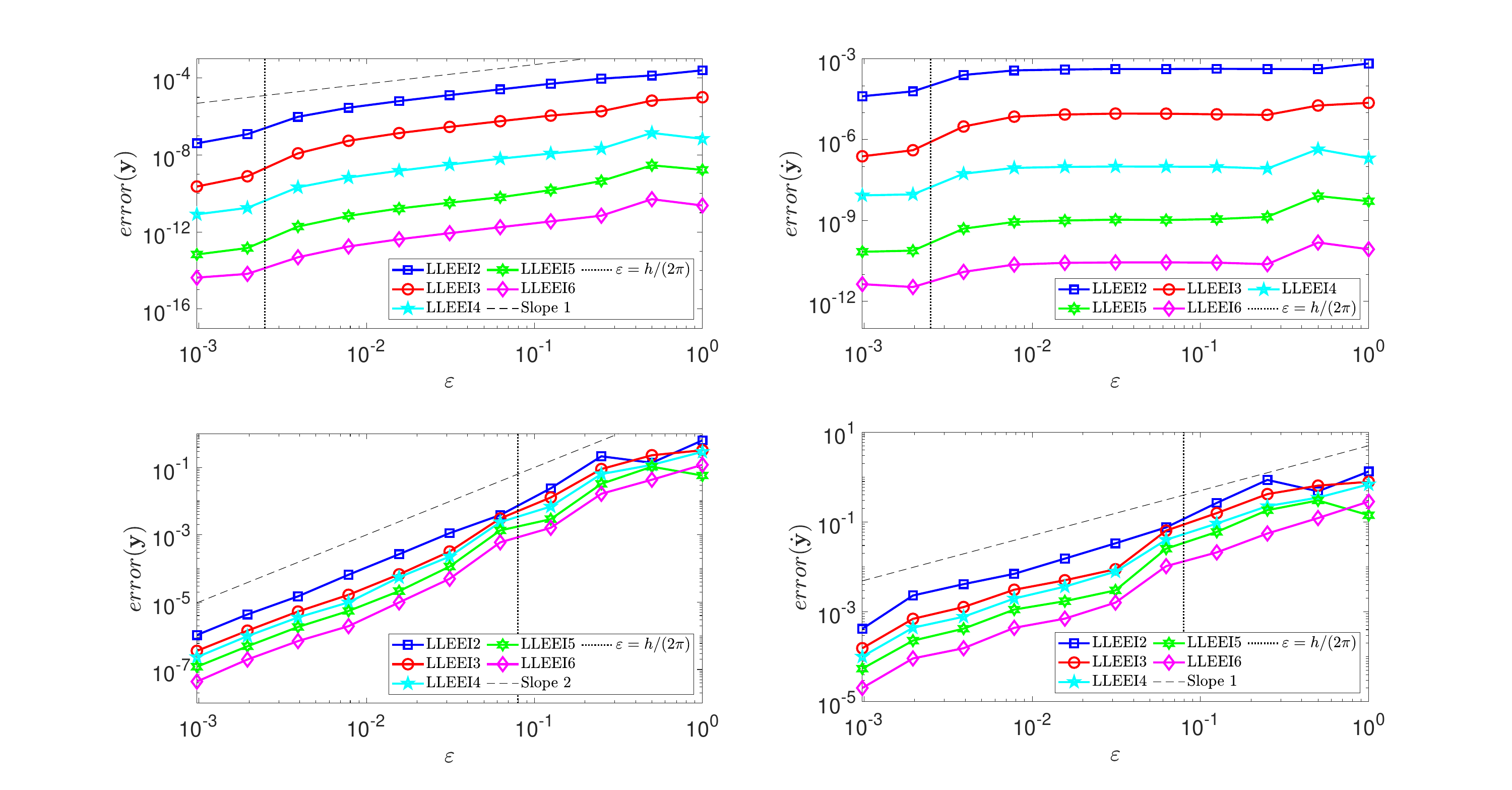}
  \caption{Error versus step size $\varepsilon$. The top row corresponds to the small step size case ($h=1/2^6$), while the bottom row represents the large step size case ($h=1/2$). The left and right columns display the numerical errors for $\mathbf{y}$ and $\dot{\mathbf{y}}$, respectively.}\label{fig1-2}
\end{figure}
\vskip 2mm

\textbf{Example 2.} We consider system (\ref{P1}) with the matrix and nonlinear function given by
\begin{equation}\label{ex2}
  A=\left[\begin{array}{cccc}
      0 & 0 & 1 & 0 \\
      0 & 0 & 0 & 1 \\
      E & 0 & 0 & B \\
      0 & E & -B & 0
    \end{array}\right],\quad
  \mathbf{F}=\left[\begin{array}{c}
               0 \\
               0 \\
               g_1(\mathbf{u})\\
               g_2(\mathbf{u})
             \end{array}\right],
\end{equation}
which models the 2D dynamics of a charged particle in an isotropic strong electric field and a perpendicular uniform strong magnetic field. Applying the scaling $\mathbf{p} = \varepsilon \dot{\mathbf{y}}$, and defining the state variable $\mathbf{u} = [\mathbf{y}^\top, \mathbf{p}^\top]^\top$, the system transforms from the underlying second-order equation
\begin{equation}\label{CPD}
  \ddot{\mathbf{y}}=\frac{E}{\varepsilon^2}\mathbf{y} + \frac{B}{\varepsilon}J\dot{\mathbf{y}} + \frac{1}{\varepsilon}\mathbf{g}(\mathbf{y},t),\quad
  J=\left[\begin{array}{cc}
            0 & 1 \\
            -1 & 0
          \end{array}\right],
\end{equation}
where $E$ and $B$ are the electric and magnetic field intensities, respectively. {We note that despite the presence of first-order derivative terms in system (\ref{CPD}), the oscillatory energy retains the form (\ref{OscEnergy}) since the magnetic force does not alter the particle's mechanical energy.} The particle is initially at the origin, with $\mathbf{y}(0) = [0,0]^\top$ and velocity $\dot{\mathbf{y}}(0) = [3,4]^\top$. The magnetic field is set by $B=1$, and the nonlinear term is given by the electrostatic force
\begin{equation*}
  \mathbf{g}(\mathbf{y},t)=\left[\begin{array}{c}
          \frac{y_1}{(y_1^2+y_2^2+(2-\cos(\pi t))^2)^{\frac{3}{2}}}\\
          \frac{y_2}{(y_1^2+y_2^2+(2-\cos(\pi t))^2)^{\frac{3}{2}}}
        \end{array}\right].
\end{equation*}
Two electric field configurations are examined. For \(E = 6\), the matrix $A$ in (\ref{ex2}) has eigenvalues \(\lambda(A) = \{\pm 2\mathrm{i}, \pm 3\mathrm{i}\}\); the corresponding linear high‑frequency oscillators share a common period. For \(E = 3\), the eigenvalues are \(\lambda(A) = \bigl\{\pm \sqrt{\frac{7\pm\sqrt{13}}{2}}\,\mathrm{i}\bigr\}\), producing non‑resonant oscillators.

For comparison, the following experiments include the second- and third-order exponential time-differencing Runge–Kutta methods from \cite{CM2002} and the fourth-order scheme from \cite{K2005}, labeled ETDRK2, ETDRK3 and ETDRK4, respectively. We also test the $k$-th order exponential Rosenbrock methods from \cite{HOS2009}, denoted EXPRB$k$. Note that EXPRB2 is essentially equivalent to LLEEI2 \cite{QD2025b}. Classical Runge–Kutta methods of order $k$ are referred to as RK$k$. The LLEEI schemes are implemented directly, without requiring advance diagonalization of $A$ or any preprocessing of periodic effects.

Fig. \ref{fig2-1} shows the error dependence of $\mathbf{u}$ on step size $h$ with fixed $\varepsilon=1/2^2$ and $\varepsilon=1/2^8$, respectively. In the weakly oscillatory regime (top row), all methods achieve their theoretical convergence orders, with only minor performance differences among exponential-type integrators. For highly oscillatory cases (bottom row), the LLEEI methods exhibit the same piecewise convergence behavior predicted by Theorems~\ref{Thm3-7} and \ref{Thm3-10}. The ETDRK and EXPRB methods remain stable under stiffness, but their convergence is limited by the oscillatory nature of the solution. Runge-Kutta methods become unstable in this regime and are therefore omitted. We also observe that whether the eigenvalues of matrix $A$ are integer multiples of a common frequency has negligible impact on the performance of exponential-type methods.

\begin{figure}[htbp]
  \centering
  \includegraphics[width=\textwidth]{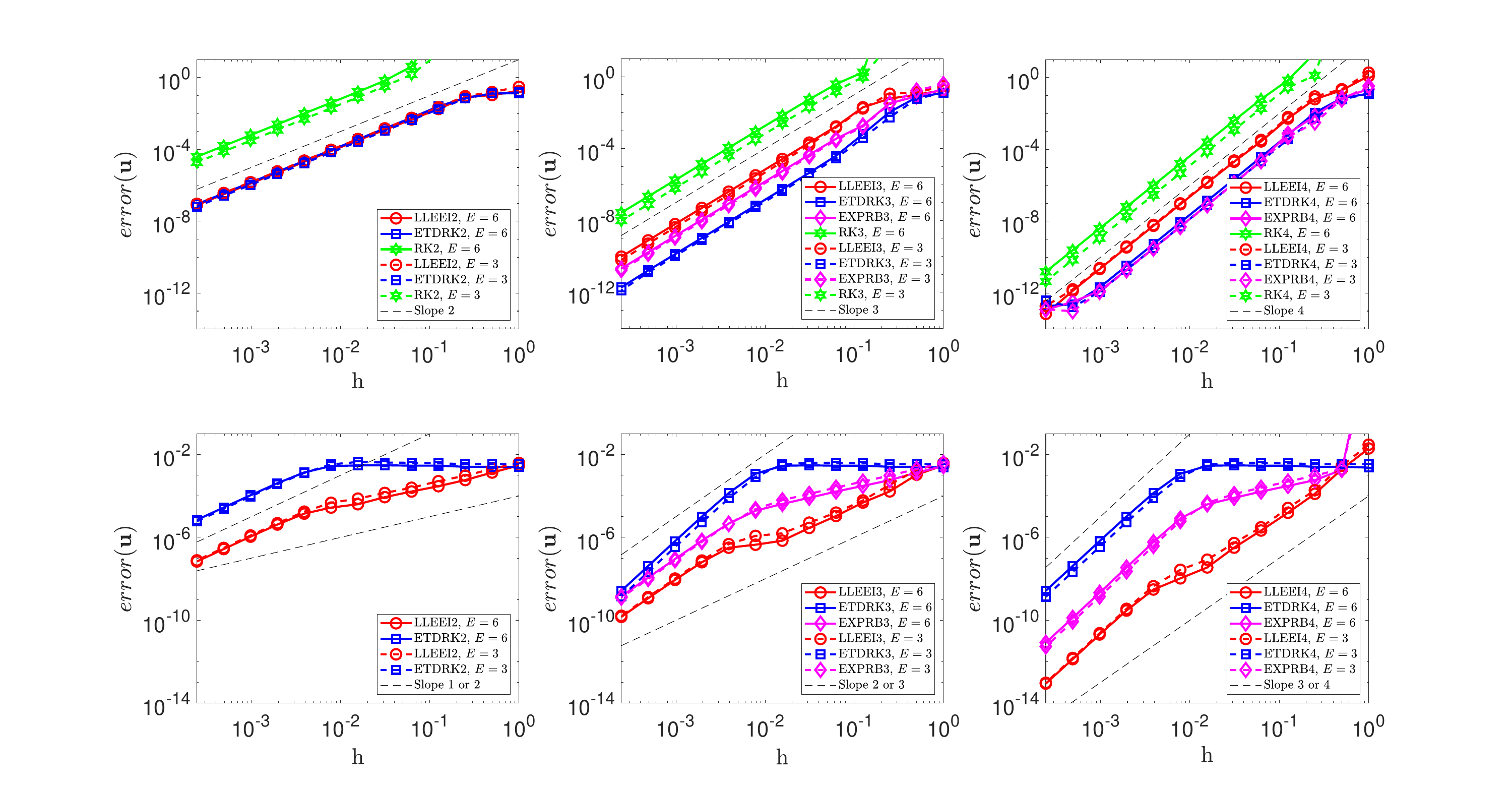}
  \caption{$error(\mathbf{u})$ versus step size $h$. The top row corresponds to the weakly oscillatory case ($\varepsilon=1/2^2$), while the bottom row represents the highly oscillatory regime ($\varepsilon=1/2^8$). The left, middle, and right columns correspond to second-, third-, and fourth-order methods, respectively.}\label{fig2-1}
\end{figure}

Fig. \ref{fig2-2} shows how the errors vary with $\varepsilon$ for two step sizes, $h=1/2^3$ and $h=1/2^8$.  Under small step sizes (bottom row), only the LLEEI methods preserve uniform accuracy in $\varepsilon$; the errors of all other tested schemes grow as $\varepsilon$ decreases. For large step sizes (top row), the errors of the LLEEI methods also consistently decrease with $\varepsilon$. Together, the bottom row of Fig.~\ref{fig2-1} and Fig. \ref{fig2-2} demonstrate that the uniform-accuracy property of the LLEEI schemes leads to superior performance over conventional exponential integrators in highly oscillatory regimes across all step sizes tested, with the advantage becoming more pronounced at higher orders.

\begin{figure}[htbp]
  \centering
  \includegraphics[width=\textwidth]{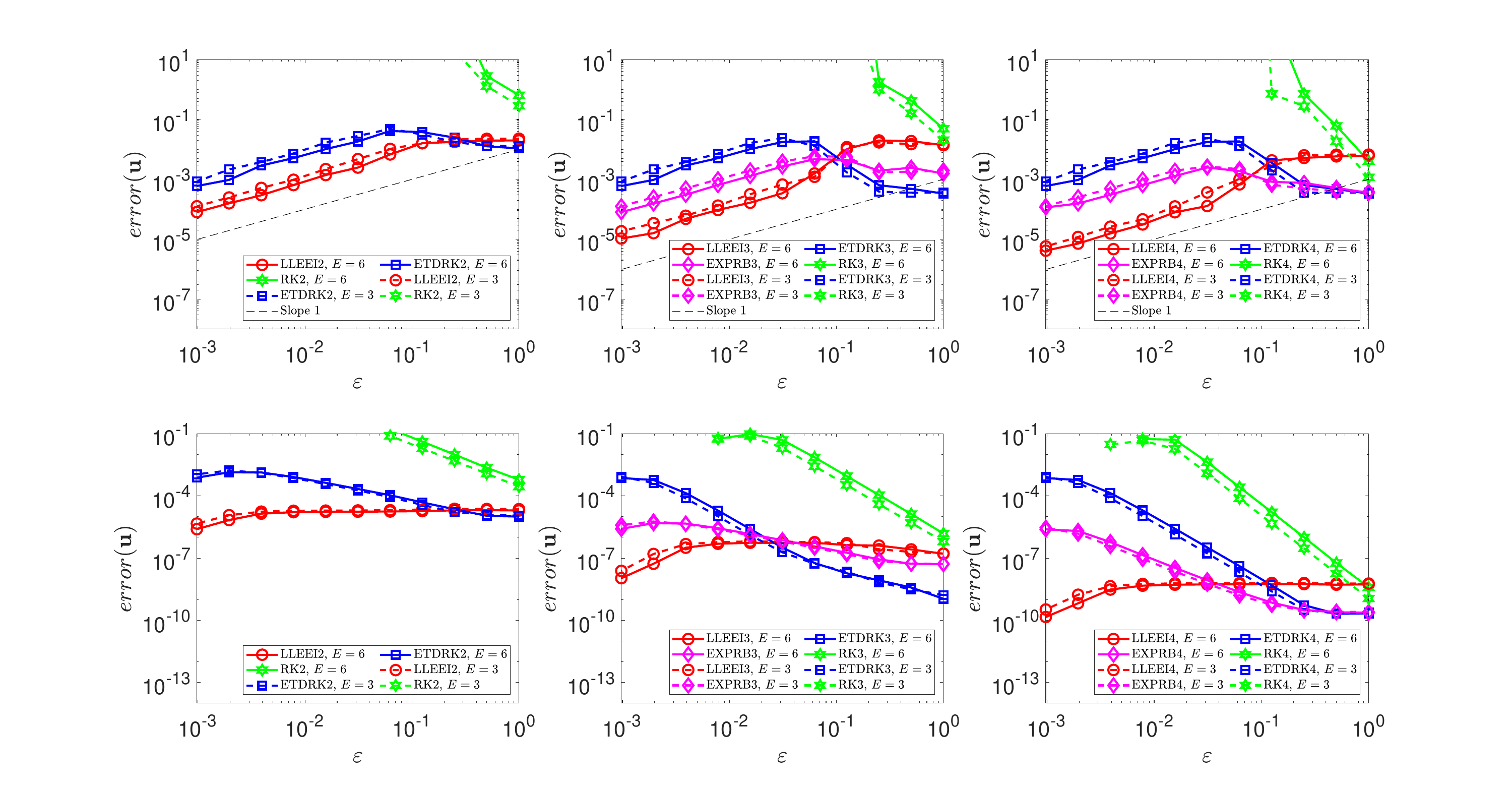}
  \caption{$error(\mathbf{u})$ versus $\varepsilon$. The top row corresponds to the large step size case ($h=1/2^3$), while the bottom row represents the small step size case ($h=1/2^8$). The left, middle, and right columns correspond to second-, third-, and fourth-order methods, respectively.}\label{fig2-2}
\end{figure}

\section{Conclusion}\label{Sec6}
This paper develops a rigorous theoretical framework for local linear extension EIs applied to highly oscillatory ODEs {with bounded oscillatory energy.} We first present a systematic review of the construction of local linear extension variables and their associated systems, followed by the derivation of the new class of EIs. Through introducing tensor product structures and performing precise algebraic operations, we demonstrate that the linear part of the high-dimensional extension system preserves essential spectral properties, including eigenvalue characteristics and diagonalizability. Some specific invariant subspaces of the differential operator and their matrix representations are analysed. These fundamental algebraic properties validate the effectiveness of the dimension-raising approach and further provide the theoretical basis for proving the uniform accuracy of the numerical schemes. The analysis establishes convergence results across different step size regimes. When solving highly oscillatory systems using large time steps $h>c_0\varepsilon$, some key elements play pivotal roles: the adiabatic transformation method, the periodic decomposition technique, and the superior algebraic properties of the local linear extension system. These allow separate analysis of oscillatory and non-oscillatory components. The numerical approach naturally yields solutions with improved uniform accuracy with respect to $\varepsilon$ for second-order non-autonomous equations. Numerical experiments confirm the optimality of these theoretical results.

\section*{Acknowledgement}
This work was supported by the National Key R\&D Program of China ( 2024YFA1012600 ) and the NSFC grant 12171237.

\begin{appendices}
\section{Algorithm for assembling high-dimensional matrices}\label{apdx}
We define a bijection $\tau_k:\tilde{\mathcal{I}}_{d+1}^{[k]}\to \{1,\cdots,D^{[k]}\}$ on the quotient set, which uniquely assigns each multi-index to a specific row or column of the matrix. To maintain consistency with the definition of local linear extension variables in Definition \ref{LLEV}, we set $\tau_k(\emptyset)=1$ and $\tau_k([j])=j+1$ for $j=1,\cdots,d+1$. The following algorithm outlines the construction of the matrices $A_1^{[k]}(\hat{\mathbf{x}})$ and $A_0^{[k]}(\hat{\mathbf{x}})$:
\begin{algorithm}
\caption{Construction of $A_1^{[k]}(\hat{\mathbf{x}})$ and $A_0^{[k]}(\hat{\mathbf{x}})$}\label{Alg01}
\begin{algorithmic}
\Require $\hat{\mathbf{x}}\in\mathbb{C}^{d+1}$, positive integer $k$, matrix $A_1$, function $\mathbf{f}(\mathbf{x})$, parameter $\varepsilon$, and sorting mapping $\tau_k$.
\Ensure matrices $A_1^{[k]}(\hat{\mathbf{x}})$ and $A_0^{[k]}(\hat{\mathbf{x}})$.
\State $A_1^{[k]}(\hat{\mathbf{x}})=A_0^{[k]}(\hat{\mathbf{x}})=\mathbf{0}_{D^{[k]}\times D^{[k]}}$.
\For{$i=2$ to $D^{[k]}$}
  \State $\overline{\alpha}=\tau_k^{-1}(i)$.
  \For{$l=1$ to $d+1$}
    \State $\alpha'=\chi(\overline{\alpha};l),i'=\tau_k(\overline{\alpha}')$
    \For{$m=1$ to $d+1$}
      \State $\left(A_1^{[k]}(\hat{\mathbf{x}})\right)_{i,i'} \leftarrow \left(A_1^{[k]}(\hat{\mathbf{x}})\right)_{i,i'}+(A_1)_{\alpha_lm}\hat{x}_m$.
      \State $\alpha''= \alpha'\oplus\{m\},i''=\tau(\overline{\alpha}'')$.
      \State $\left(A_1^{[k]}(\hat{\mathbf{x}})\right)_{i,i''} \leftarrow \left(A_1^{[k]}(\hat{\mathbf{x}})\right)_{i,i''}+(A_1)_{\alpha_lm}$.
    \EndFor
    \For{$m=0$ to $k-|\alpha|+1$}
      \For{$\overline{\beta}\in\tilde{\mathcal{I}}_{d+1}^{[[m]]}$}
        \State $\alpha''= \alpha' \oplus \overline{\beta},i''=\tau_k(\overline{\alpha}'')$,
        \State $\left(A_0^ {[k]}(\hat{\mathbf{x}})\right)_{i,i''} \leftarrow \left(A_0^ {[k]}(\hat{\mathbf{x}})\right)_{i,i''} + \frac{1}{\gamma(\overline{\beta})} \frac{\partial^{\overline{\beta}}\mathbf{f}(\hat{\mathbf{x}})}{\partial \mathbf{x}^{\overline{\beta}}}$.
      \EndFor
    \EndFor
  \EndFor
\EndFor
\end{algorithmic}
\end{algorithm}
\end{appendices}

\bibliography{ref}

\end{document}